\newtheorem{theorem}{Theorem}[section]
\newtheorem{lemma}[theorem]{Lemma}
\theoremstyle{definition}
\newtheorem{example}{Example}[section]
\newtheorem{corollary}[theorem]{Corollary}
\newtheorem{proposition}[theorem]{Proposition}
\newtheorem*{rep@theorem}{\rep@title}
\newcommand{\newreptheorem}[2]{%
\newenvironment{rep#1}[1]{%
 \def\rep@title{#2 \ref{##1}*}%
 \begin{rep@theorem}}%
 {\end{rep@theorem}}}
\newtheorem{problem}{Problem}
\theoremstyle{remark}
\newtheorem{remark}{Remark}[section]
\newcommand{\F}{ \ensuremath{ \mathbb{F}}}
\newcommand{\eS}{ \ensuremath{ \mathbb{S}}}
\newcommand{\Z}{ \ensuremath{ \mathbb Z}}
\newcommand{\Tr}{ \ensuremath{ \mathrm{Tr}}}
\newcommand{\Aut}{ \ensuremath{ \mathrm{Aut}}}
\newcommand{\bbM}[3]{\mathbb{M}_{#1\times#2}(#3)}
\newcommand{\rZ}[1]{\mathbb{Z}/#1\mathbb{Z}}
\newcommand{\cy}[1]{\Z_{#1}}
\numberwithin{equation}{section}
\begin{document}
\title{$(2^n,2^n,2^n,1)$-relative difference sets and their representations}
\author{Yue Zhou}
\keywords{relative difference set, commutative semifield, projective plane, isotopism}
\begin{abstract}
    We show that every $(2^n,2^n,2^n,1)$-relative difference set $D$ in $\Z_4^n$ relative to $\Z_2^n$  can be represented by a polynomial $f(x)\in \F_{2^n}[x]$, where $f(x+a)+f(x)+xa$ is a permutation for each nonzero $a$. We call such an $f$ a planar function on $\F_{2^n}$. The projective plane $\Pi$ obtained from $D$ in the way of Ganley and Spence \cite{ganley_relative_1975} is coordinatized, and we obtain necessary and sufficient conditions of $\Pi$ to be a presemifield plane. We also prove that a function $f$ on $\F_{2^n}$ with exactly two elements in its image set and $f(0)=0$ is planar, if and only if, $f(x+y)=f(x)+f(y)$ for any $x,y\in\F_{2^n}$.
\end{abstract}
\maketitle

\section{Introduction}\label{se_introduction}

Let $G$ be a group of order $v=mn$, let $N$ be a subgroup of $G$ satisfying $\# N=n$.
A $k$-subset $D$ of $G$  is called a \emph{relative difference set} with parameters $(m,n,k,\lambda)$ (abbreviated to $(m,n,k,\lambda)$-RDS) relative to $N$ if the list of differences of $D$ covers every element of $G\setminus N$ exactly $\lambda$ times, and no element of $N\setminus \{0\}$; $N$ is called the \emph{forbidden subgroup}. We call $D$ \emph{abelian} or \emph{cyclic} if $G$ has the respective property. A $(v,1,k,\lambda)$-RDS is just a $(v,k,\lambda)$-difference set.

A special case of relative difference sets appeared first in the work of Bose \cite{bose_affine_1942}, and the term ``relative difference set'' was introduced by Butson \cite{butson_relations_1963}. The main motivation to study relative difference sets comes from the fact that the existence of an $(m,n,k,\lambda)$-RDS in $G$ relative to $N$ is equivalent to the existence of a \emph{divisible design} with the same parameters, admitting $G$ as a regular automorphism group. If $N$ is normal in $G$, then $N$ acts regularly on each point class of this design. For further results and references see \cite{pott_survey_1996}. For a comprehensive introduction to difference sets see Chapter VI of \cite{beth_design_1999}.

A \emph{projective plane} consists of a set of lines $\mathcal{L}$ and a set of points $\mathcal{B}$ such that for any two given distinct points (resp. lines), there is exactly one line (resp. point) incident with both of them, and there are four points such that no line is incident with more than two of them. By removing one line and the points on this line from a projective plane, we get an \emph{affine plane} which forms a divisible design.

Two projective planes are called \emph{isomorphic} if there is a permutation on $\mathcal{L}$ and a permutation on $\mathcal{B}$ such that the incidence relationship is preserved. A \emph{collineation} (or \emph{automorphism}) of a projective plane is an isomorphism of the plane onto itself.

Every projective plane $\Pi$ can be coordinatized with a  \emph{planar ternary ring}, abbreviated to \emph{PTR}, in an appropriate way. Different ways of coordinatization may lead to different PTRs. Moreover, if $\Pi$ has some special types of collineation group, it may give rise to some extra algebraic properties of the PTR. For instance, if $G$ fixes a line $L_\infty$ pointwise and acts regularly on the other points, then using the coordinatization method in \cite{hughes_projective_1973} we can get a special PTR which is actually a \emph{quasifield}; this line $L_\infty$ is called a \emph{translation line} and $\Pi$ is a \emph{translation plane}. For the definitions of PTRs, quasifields and other basic fact of projective planes, we refer to \cite{hughes_projective_1973}.

As shown by Ganley and Spence \cite[Theorem 3.1]{ganley_relative_1975}, if $D\subseteq G$ is an RDS  relative to a normal subgroup $N$ with parameters $(m,n,k,1)=(q,q,q,1),(q+1,q-1,q,1)$ or $(q^2+q+1, q^2-q,q^2,1)$, then $D$ can be uniquely extended to a projective plane $\Pi$. Furthermore, $G$ acts quasi-regularly on the points and lines of $\Pi$, and the point orbits (and line orbits) of $\Pi$ under $G$ are the cases $(b)$, $(d)$ and $(e)$  respectively in the Dembowski-Piper classification \cite{dembowski_quasiregular_1967}. For case $(b)$, in which $D$ is a $(q,q,q,1)$-RDS in $(G,+)$ relative to $N$, the projective plane $\Pi$ can be obtained in the following way \cite{ganley_relative_1975}:
\begin{description}
  \item[affine points] $g\in G$;
  \item[lines] $D+g := \{d+g: d \in D\}$, and the distinct cosets of $N$: $N+g_1,\dots,N+g_q$ and a new line $L_\infty$ defined as a set of extra points;
  \item[points on $L_\infty$] $(g_i)$ defined by the parallel class $\{D+g_i+h: h\in N\}$ and $(\infty)$ defined by the parallel class $\{N+g_1,\dots,N+g_q\}$.
\end{description}
Under the action of $G$, the points of $\Pi$ form three orbits: the affine points, $\{(\infty)\}$, and the points on $L_\infty$ except for $(\infty)$. The lines of $\Pi$ also have three orbits: $\{D+g:g\in G\}$, $\{L_\infty\}$, and all the lines incident with $(\infty)$ except for $L_\infty$. When $G$ is commutative, $\Pi$ is called a \emph{shift plane} and $G$ is its \emph{shift group}.

Let $\Z_m$ denote the cyclic group of order $m$. Ganley \cite{ganley_paper_1976} and Blokhuis, Jungnickel, Schmidt \cite{blokhuis_proof_2002} proved the following result:
\begin{theorem}\label{th_(q,q,q,1)-RDS}
    Let $D$ be a $(q,q,q,1)$-RDS in an abelian group $G$ of order $q^2$.
    \begin{itemize}
      \item\cite{ganley_paper_1976}  If $q$ is even, then $q$ is a power of $2$ (say $q=2^n$), $G\cong\Z_4^n$ and the forbidden subgroup $N\cong\Z_2^n$ (see also \cite{jungnickel_theorem_1987} for a short proof);
      \item\cite{blokhuis_proof_2002} If $q$ is odd, then $q$ is a prime power (say $q=p^n$) and the rank of $G$, i.e. the smallest cardinality of a generating set for $G$,  is at least $n+1$.
    \end{itemize}
\end{theorem}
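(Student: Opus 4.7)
The plan rests on translating the RDS condition into the group ring identity
\[DD^{(-1)} = q\cdot 1_G + G - N\qquad\text{in } \Z[G],\]
where $G$ and $N$ are identified with the formal sums of their elements. Applying an arbitrary character $\chi$ of $G$ yields
\[|\chi(D)|^2 = \begin{cases} q^2 & \chi \text{ trivial,}\\ 0 & \chi \text{ nontrivial but trivial on } N,\\ q & \chi \text{ nontrivial on } N.\end{cases}\]
This recasts the combinatorial problem as one about algebraic integers of prescribed absolute value in the cyclotomic field $\mathbb{Q}(\zeta_{\exp(G)})$, and all subsequent steps live in that arithmetic framework.

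First I would establish that $q$ is a prime power. For each prime $r$ dividing $q$, the relation $\chi(D)\overline{\chi(D)} = q$ applied to characters of $r$-power order, combined with a standard multiplier/Kronecker argument, forces $r \mid \exp(G)$; inspection of the cyclotomic ideals above $r$ together with the Sylow decomposition of $G$ then yields $q = p^n$ for a single prime $p$. This handles both cases simultaneously and is the common skeleton of the two proofs.

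For the even case ($q = 2^n$), I would next bound $\exp(G) \le 4$ by exploiting the total ramification of $2$ in $\Z[\zeta_{2^a}]$: the norm condition $|\chi(D)|^2 = 2^n$ cannot be met simultaneously by the required characters once $a \ge 3$. Writing $G \cong \Z_4^a \times \Z_2^b$ with $2a + b = 2n$, the combinatorial requirement that $N$ be a forbidden subgroup of order $2^n$ through which $D$ meets every coset exactly once forces $N$ to coincide with the $2$-torsion subgroup of $G$; this pins down $b = 0$ and $a = n$, so $G \cong \Z_4^n$ and $N \cong \Z_2^n$.

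For the odd case, suppose for contradiction that $\mathrm{rank}(G) \le n$, equivalently $\dim_{\F_p}(G/pG) \le n$. Then $G$ has elementary divisors $p^{a_1}, \ldots, p^{a_r}$ with $r \le n$ and $\sum a_i = 2n$, so some $a_i \ge 2$. Nontrivial characters of $G$ therefore factor through cyclic $p$-groups, and the ideal $(\chi(D))$ in $\Z[\zeta_{p^k}]$ must simultaneously have absolute norm $q$ for every character nontrivial on $N$. The main obstacle, and the substance of the Blokhuis--Jungnickel--Schmidt argument, is a delicate $p$-adic analysis ruling out the joint realisability of these constraints whenever $\mathrm{rank}(G) \le n$; once that analysis is in place, the rank lower bound follows immediately.
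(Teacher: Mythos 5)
There is nothing in the paper to compare against: Theorem \ref{th_(q,q,q,1)-RDS} is stated without proof and attributed to Ganley \cite{ganley_paper_1976} (with Jungnickel's short proof \cite{jungnickel_theorem_1987}) and to Blokhuis--Jungnickel--Schmidt \cite{blokhuis_proof_2002}. Measured against those sources, your sketch gets the standard translation right --- $DD^{(-1)}=q\cdot 1_G+G-N$ and the three character values --- but every load-bearing step after that is either false or deferred. The concrete failure is in the even case: you claim that total ramification of $2$ in $\Z[\zeta_{2^a}]$ makes the conditions $|\chi(D)|^2=2^n$ unrealisable once $a\ge 3$, and this is not true. In $\Z[\zeta_8]$ the element $X=(1-\zeta_8)^{2n}\,(1+\zeta_8+\zeta_8^{-1})^{n}$ satisfies $X\overline{X}=(2-\sqrt2)^{2n}(1+\sqrt2)^{2n}=2^{n}$, and since the fundamental unit $1+\sqrt2=1+\zeta_8+\zeta_8^{-1}$ exactly compensates the discrepancy between $|1-\zeta_8|$ and $|1-\zeta_8^3|$, \emph{every} Galois conjugate of $X$ also has absolute value $2^{n/2}$. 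So the norm conditions alone cannot exclude an element of order $8$ in $G$; some combinatorial input is indispensable. The classical route is elementary: because $\lambda=1$, each $g\in G\setminus N$ has a unique representation $g=d-d'$, so an involution $g=-g$ would force $(d,d')=(d',d)$ and hence $d=d'$; thus $G[2]\subseteq N$. Ganley's key lemma, that $2g\in N$ for all $g\in G$, is obtained by a similarly combinatorial uniqueness argument, and from $2G\subseteq N\supseteq G[2]$ one reads off $|G[2]|=|N|=q$ (so $q=2^n$), $N=G[2]=2G$, $\exp(G)=4$ and $G\cong\Z_4^n$. In your sketch the statement ``$N$ coincides with the $2$-torsion subgroup'' is asserted, not derived, and the decomposition $G\cong\Z_4^a\times\Z_2^b$ already presupposes the exponent bound you failed to establish.

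The other two claims fare no better. For the prime-power statement, Cauchy's theorem already gives $r\mid\exp(G)$ for every prime $r\mid q$, so the ``multiplier/Kronecker argument'' buys nothing, and ``inspection of the cyclotomic ideals above $r$ together with the Sylow decomposition'' is precisely the Prime Power Conjecture for these planes --- the main theorem of \cite{blokhuis_proof_2002}, proved there not by ideal-theoretic inspection but by a counting argument on intersection numbers $|D\cap(U+g)|$ for suitable subgroups $U$. For the odd case you state explicitly that ``the substance of the Blokhuis--Jungnickel--Schmidt argument'' is an analysis you do not supply. A writeup consisting of the correct character identities followed by an appeal to the theorem being proved is not a proof; as it stands, only the (routine) first display of your proposal is established.
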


For $G$ abelian and $q=p^n$ odd, all the known $(q,q,q,1)$-RDS are subsets of $(\F_p^{2n},+)$, a $2n$-dimensional vector space over a finite field $\F_{p}$. Moreover, if $D$ is a $(q,q,q,1)$-RDS in $G$, then all the known examples can be expressed as:
$$D = \{(x,f(x)): x\in \F_{p^n})\},$$
where $f$ is a so-called \emph{planar function} from $\F_{p^n}$ to itself. A function $f:\F_{p^n} \rightarrow\F_{p^n}$ is \emph{planar} if the mapping
$$x\mapsto f(x+a)-f(x),$$
is a permutation for all $a\neq 0$. Until now, except for one special family, every known planar function on $\F_{p^n}$ can be written, up to adding affine terms, as a \emph{Dembowski-Ostrom Polynomial} $f(x)=\sum_{i,j=0}^{n-1}c_{ij}x^{p^i+p^j}$. It produces a commutative distributive quasifield plane \cite{dembowski_planes_1968}, namely a commutative semifield plane, on which the multiplication is defined by:
\begin{equation}\label{eq:x star y}
    x\star y := \frac{1}{2}(f(x+y)-f(x)-f(y))=\sum_{i,j=0}^{n-1}\frac{c_{ij}+c_{ji}}{2}x^{p^i}y^{p^j}.
\end{equation}
The only known exception was given by Coulter and Matthews \cite{Coulter-Matthews1997}:
$$f(x)=x^{\frac{3^\alpha+1}{2}}$$
over $\F_{3^n}$ provided that $\alpha$ is odd and $\gcd(\alpha, n)=1$. The plane produced by the corresponding RDS is not a translation plane \cite{Coulter-Matthews1997}.

A \emph{semifield} is a field-like algebraic structure on which the associative property of multiplication does not necessarily hold. When the existence of a multiplicative identity is not guaranteed, then it is called \emph{presemifield}. Let $\mathbb{S}=(\mathbb{F}_{p^n},+,\star )$ be a semifield. The subsets
\begin{equation*}
\begin{aligned}
  N_l(\mathbb{S})=\{a\in \mathbb{S}: (a\star x)\star y=a\star (x\star y) \text{ for all }x,y\in \mathbb{S}\},\\
  N_m(\mathbb{S})=\{a\in \mathbb{S}: (x\star a)\star y=x\star (a\star y) \text{ for all }x,y\in \mathbb{S}\},\\
  N_r(\mathbb{S})=\{a\in \mathbb{S}: (x\star y)\star a=x\star (y\star a) \text{ for all }x,y\in \mathbb{S}\},
\end{aligned}
\end{equation*}
are called the \emph{left, middle} and \emph{right nucleus} of $\mathbb{S}$, respectively. A recent survey about finite semifields can be found in \cite{lavrauw_semifields_2011}. One important fact that we need here is that the order of every finite (pre)semifield is a power of a prime and its additive group is  elementary abelian.  Now let $\eS=(\F_{p^n}, +, *)$ be a finite presemifield. Since $x* y$ is additive with respect to both variables $x$ and $y$, it can be written as a \emph{$p$-polynomial} map:
$$x* y =\sum_{i,j=0}^{n-1} a_{ij} x^{p^i} y^{p^j},$$
where $a_{ij}\in\F_{p^n}$. For $p$ odd, a commutative semifield can be obtained through (\ref{eq:x star y}) from a $(p^n,p^n,p^n,1)$-RDS. If $p=2$ and a $(2^n,2^n,2^n,1)$-RDS defines a commutative semifield, then we will show later how to get its multiplication in the form of a $p$-polynomial. If a plane $\Pi$ can be coordinatized with a semifield, $\Pi$ is called a \emph{semifield plane}. It can be shown that $\Pi$ is a semifield plane if and only if $\Pi$ and its dual plane are both translation planes. In some papers this condition is used as the definition of semifield planes, see for example \cite{lavrauw_semifields_2011}.

The rest of this paper is organized as follows:  In Section \ref{se_representations}, we give two representations of a $(2^n,2^n,2^n,1)$-RDS in $\Z_4^n$. Then in Section \ref{se_coordination}, we coordinatize the plane $\Pi$ from a $(2^n,2^n,2^n,1)$-RDS, and present necessary and sufficient conditions that $\Pi$ is a semifield plane. In Section \ref{se_nonexistence}, we consider planar functions $f$ with $\#\mathrm{Im}(f)=2$. In Section \ref{se_component}, we look at the component functions of the $\F_2^n$-representations.

\section{Representations of $D$}\label{se_representations}
Theorem \ref{th_(q,q,q,1)-RDS} shows that every $(2^n,2^n,2^n,1)$-RDS is a subset of $\Z_4^n$ relative to $2\Z_4^n\cong\Z_2^n$. For convenience, we will always say ``$\Z_4^n$ relative to $\Z_2^n$'' instead of ``$\Z_4^n$ relative to $2\Z_4^n$'' in the rest of this paper. Before considering these relative difference sets, we introduce some notation for the elements of $\Z_4^n$. Define an embedding $\psi :\F_2\rightarrow \Z_4=\{0,1,2,3\}$ by $0^\psi=0$ and $1^\psi=1$, and define $\Psi :\F_2^n\rightarrow \Z_4^n$ by
$$\Psi(x_0,x_1,\cdots,x_{n-1})=(\psi(x_0),\psi(x_1),\cdots,\psi(x_{n-1})).$$
As $0^\psi+0^\psi=0$, $0^\psi+1^\psi=1$ and $1^\psi+1^\psi=0+2\cdot 1^\psi$, we have
\begin{equation}\label{eq:basic_psi}
    x^\psi+y^\psi= (x+y)^\psi + 2(xy)^\psi
\end{equation}
for $x,y\in\cy{2}$.
Every $\xi\in\Z_4^n$ can be uniquely expressed as a 2-tuple
$$\xi = \lfloor a,b\rfloor:=a^\Psi+2b^\Psi,$$
where $a,b\in\F_2^n$. For instance $3\in\Z_4$ is $\lfloor 1,1 \rfloor$, and the forbidden subgroup can be written as $\{\lfloor 0,b \rfloor: b \in\F_2^n \}$. Furthermore, for $\lfloor a,b\rfloor ,\lfloor c,d\rfloor \in \Z_4^n$, we have
$$\lfloor a,b\rfloor +\lfloor c,d\rfloor =\lfloor a+ c, b+ d + (a\odot c)\rfloor ,$$
where $a\odot c:= (a_0c_0,\dots,a_{n-1} c_{n-1})$
for $a=(a_0,\dots,a_{n-1})$ and $c=(c_0,\dots,c_{n-1})$. It is clear from the context whether the symbol ``$+$'' refers to the addition of $\Z_{4}^n$ or the addition of $\Z_{2}^n$.

\begin{remark}
    In the language of group theory, $\Z_4^n$ can be viewed as an extension of $\Z_2^n$ by $\Z_2^n$. The set $\{\lfloor a, 0 \rfloor: a\in\F_2^n\}$ forms a transversal for the subgroup $\Z_2^n$ in $\Z_4^n$ and $\odot : \Z_2^n\times\Z_2^n \rightarrow \Z_2^n$ is the corresponding factor set (or cocycle). Factor sets form an important tool for many combinatorial objects, see \cite{horadam_hadamard_2007} for the applications to Hadamard matrices and RDSs.
\end{remark}

Let $D$ be a transversal of the forbidden subgroup $\Z_2^n$ in $\Z_4^n$. Then we can write every element of $D$ as
\begin{equation}\label{eq:d,h(d)}
  \lfloor d,h(d)\rfloor = d^\Psi+2h(d)^\Psi,
\end{equation}
where $h$ is a mapping from $\Z_4^n/ \Z_2^n$ to $\Z_2^n$. When $D$ is a $(2^n,2^n,2^n,1)$-RDS, $D$ is also a transversal, otherwise the difference list of $D$ will contain some element of the forbidden subgroup $\Z_2^n$. Let $\lfloor a,b\rfloor \in \Z_4^n$ and $a\neq 0$. As there is exactly one element of $(D+\lfloor a,b\rfloor)\cap D$, the equation
$$\lfloor d+ a, h(d)+ b+ (d\odot a)\rfloor =\lfloor d', h(d')\rfloor ,$$
holds for exactly one pair $(d,d')$, which means that the mapping
\begin{equation}\label{eq:Delta_h}
  \Delta_{h,a} : d \mapsto h(d+ a) + h(d)+ (d\odot a)
\end{equation}
is bijective for each $a\neq 0$. Conversely, if $h$ is  such that $\Delta_{h,a}$ is a permutation for all nonzero $a$, then $D$ is a $(2^n,2^n,2^n,1)$-RDS.

\begin{remark}
    In fact, $\Delta_{h,a}$ already appears in \cite{yutaka_factor_1991} by Hiramine in the form of factor sets. However, our main idea here is to use it to derive special types of functions over finite fields. It is more convenient and easier to consider the properties and constructions of $(2^n,2^n,2^n,1)$-RDS, since every mapping from a finite field to itself can be expressed as a polynomial.
\end{remark}

As $h:\Z_4^n/\Z_2^n\rightarrow\Z_2^n$ defined by $D$ by (\ref{eq:d,h(d)}) can be viewed as a mapping from $\F_2^n$ to itself, we call $h$ the \emph{$\F_2^n$-representation} of the transversal $D$. We will use it to introduce the polynomial representation over a finite field.

Let $B=\{\xi_i: i=0,1,\cdots, n-1\}$ be a basis of $\F_{2^n}$ over $\F_2$. We can view both $h$ and $\Delta_{h,a}$ as mappings from $\F_{2^n}$ to itself using this basis $B$, and express them as polynomials $h_B, \Delta_{h_B,a}\in \F_{2^n}[x]$. Furthermore, for $x=\sum_{i=0}^{n-1}x_i\xi_i$ and $y=\sum_{i=0}^{n-1}y_i\xi_i\in\F_{2^n}$, we define
$$x\odot_B y:=\sum_{i=0}^{n-1} x_i y_i \xi_i,~~\mu_B(x):=\sum_{i< j} x_i x_j \xi_i\xi_j,$$
and
$$f_B(x):= h_B(x)^2+\mu_B(x).$$
Then
\begin{align*}
    \nabla_{f_B,a}(x):& =f_B(x+a)+f_B(x)+f_B(a)+xa\\
                      & = (h_B(x+a) + h_B(x) + h_B(a))^2+(\mu_B(x+a) + \mu_B(x) + \mu_B(a))+xa\\
                      & = (h_B(x+a) + h_B(x) + h_B(a))^2+(x\odot_B a)^2\\
                      & = (\Delta_{h_B,a}(x) + h_B(a))^2,
\end{align*}
and it is straightforward to see that for each $a\neq 0$, (\ref{eq:Delta_h}) is a bijection if and only if $\nabla_{f_B,a}(x)$ acts on $\F_{2^n}$ as a permutation. We call $f_B(x)\in\F_{2^n}[x]$ the \emph{$\F_{2^n}$-representation} of $D$ with respect to the basis $B$. We summarize the above results in the following theorem.
\begin{theorem}\label{th_represetation}
    Let $D\subseteq \Z_4^n$ be a transversal of $\Z_2^n$ in $\Z_4^n$, let $B$ be a basis of $\F_{2^n}$ over $\F_2$, let $h$ be the $\F_2^n$-representation of $D$ and let $f$ be the $\F_{2^n}$-representation $f_B(x)$ with respect to $B$. Then the following statements are equivalent:
    \begin{enumerate}
        \item $D$ is an RDS in $\Z_4^n$ relative to $\Z_2^n$;
        \item $\Delta_{h,a}$ is bijective for each $a\neq 0$;
        \item $\nabla_{f_B,a}(x)$ is a permutation polynomial for each $a\neq 0$.
    \end{enumerate}
\end{theorem}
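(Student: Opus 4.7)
My plan is to establish the theorem as a cycle of two equivalences, most of whose ingredients are already assembled in the surrounding discussion.

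For \textbf{(1)$\Leftrightarrow$(2)}, I would start from the fact that, since $D$ is a transversal of $\Z_2^n$ in $\Z_4^n$, its elements are exactly $\lfloor d,h(d)\rfloor$ for $d\in\F_2^n$. Given a non-forbidden element $\lfloor a,b\rfloor$ (i.e.\ $a\neq 0$) of $\Z_4^n$, the number of representations of $\lfloor a,b\rfloor$ as a difference $\lfloor d',h(d')\rfloor-\lfloor d,h(d)\rfloor$ equals, via the addition formula in $\Z_4^n$ and the computation already displayed in the excerpt, the number of $d\in\F_2^n$ satisfying $\Delta_{h,a}(d)=b$ (with $d'=d+a$). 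So $D$ being an RDS is equivalent to $\Delta_{h,a}$ hitting every $b$ exactly once for every $a\neq 0$, which is exactly bijectivity of $\Delta_{h,a}$. The forbidden case $a=0$, $b\neq 0$ never arises from a difference within the transversal, so no extra condition is needed.

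For \textbf{(2)$\Leftrightarrow$(3)}, the identity
\[
\nabla_{f_B,a}(x)=\bigl(\Delta_{h_B,a}(x)+h_B(a)\bigr)^2
\]
in the text reduces the claim to two routine facts in characteristic $2$: the Frobenius $x\mapsto x^2$ is a bijection on $\F_{2^n}$, and translation by $h_B(a)$ is a bijection on $\F_{2^n}$. Hence $\nabla_{f_B,a}$ permutes $\F_{2^n}$ if and only if $\Delta_{h_B,a}$ does, and since $h_B$ and $h$ are the same map read through the basis $B$, this is the same as bijectivity of $\Delta_{h,a}$.

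The one step that genuinely needs verification is the key identity
\[
\mu_B(x+a)+\mu_B(x)+\mu_B(a)+xa=(x\odot_B a)^2
\]
in $\F_{2^n}$, which underlies the collapse in the displayed expansion of $\nabla_{f_B,a}(x)$. I would check it by expanding both sides in coordinates with respect to $B$: the bilinear cross terms $\sum_{i<j}(x_ia_j+a_ix_j)\xi_i\xi_j$ coming from $\mu_B(x+a)+\mu_B(x)+\mu_B(a)$ cancel against the off-diagonal part of $xa=\sum_{i,j}x_ia_j\xi_i\xi_j$, leaving precisely the diagonal part $\sum_i x_ia_i\xi_i^2=(x\odot_B a)^2$ (using $x_i^2=x_i$ in $\F_2$). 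Apart from this bookkeeping, the theorem is just a concatenation of the three reformulations. I do not anticipate any conceptual obstacle; the only mild care needed is to keep the two meanings of ``$+$'' (in $\Z_4^n$ versus in $\Z_2^n$) straight when translating between the RDS condition and bijectivity of $\Delta_{h,a}$.
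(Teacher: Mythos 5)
Your proposal is correct and takes essentially the same route as the paper, which presents no separate proof but simply summarizes the preceding discussion: the difference-counting argument identifying solutions of $\Delta_{h,a}(d)=b$ with representations of $\lfloor a,b\rfloor$ gives (1)$\Leftrightarrow$(2), and the displayed identity $\nabla_{f_B,a}(x)=\bigl(\Delta_{h_B,a}(x)+h_B(a)\bigr)^2$ together with bijectivity of the Frobenius and of translation gives (2)$\Leftrightarrow$(3). Your coordinate verification of $\mu_B(x+a)+\mu_B(x)+\mu_B(a)+xa=(x\odot_B a)^2$ is the one computation the paper leaves implicit, and it checks out.
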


\begin{remark}\label{remark_no_linear_term}
Let $u,v\in\F_{2^n}$, let $(c_0,c_1,\dots,c_{n-1})$ be a nonzero vector in $\F_2^n$ and let $d_0\in\F_2$. If $g_B(x)= f_B(x) + u x^{2^i}+v$ for some $i$, then
\begin{align*}
  \nabla_{g_B,a}(x)&=g_B(x+a)+g_B(x)+g_B(a)+xa\\
  &=f_B(x+a)+f_B(x)+f_B(a)+v+xa\\
  &=\nabla_{f_B,a}(x)+v
\end{align*}
which means that adding affine terms $u x^{2^i}+v$ to $f_B(x)$ does not change the permutation properties of $\nabla_{f_B,a}$. By a similar argument, we see that adding $\sum_i c_i x_i+d_0$ to any coordinate functions $h_i$ of $h$ does not change the permutation properties of $\Delta_{h,a}$ either. If there is no $u x^{2^i}$ or constant term in $f_B(x)$ (resp. no linear or constant term in every coordinate function of $h$), we call $f_B$ a \emph{normalized} $\F_{2^n}$-representation (resp. $h$ a \emph{normalized} $\F_2^n$-representation) of $D$. They are similar to Dembowski-Ostrom polynomials in $\F_{p^n}[x]$ with odd $p$, because both of them have no linear or constant term.
\end{remark}

Since we can always use an RDS in $\Z_4^n$ relative to $\Z_2^n$ to construct a plane of order $2^n$, we call $f:\F_{2^n} \rightarrow \F_{2^n}$ a \emph{planar function} if for each $a\neq 0$,
$$f(x+a)+f(x)+xa$$
is a permutation on $\F_{2^n}$. As every mapping from $\F_{2^n}$ to itself can be written as a polynomial in $\F_{2^n}[x]$, the corresponding polynomial of a planar function is called \emph{planar polynomial}.
\begin{remark}
    For $p$ odd, Dembowski and Ostrom defined planar functions in the following way: $f:\F_{p^n} \rightarrow \F_{p^n}$ is \emph{planar} if, for all $a\neq 0$,
    $$f(x+a)-f(x)$$
    is a permutation on $\F_{p^n}$, see \cite{dembowski_planes_1968}. They showed that $\{(x,f(x):x \in \F_{p^n})\}$ is a $(p^n,p^n,p^n,1)$-RDS in $\Z_p^{2n}$ relative to $\Z_p^{n}$ if and only if $f$ is planar.  Planar functions for $p$ odd are also called \emph{perfect nonlinear functions}; they are useful in the construction of S-boxes in block ciphers to resist some attacks, see \cite{nyberg_perfect_1991}. However, they do not exist on $\F_{2^n}$, since $f(x+a)-f(x)=f((x+a)+a)-f(x+a)$. Therefore it does not make sense to extend the classical definition of planar functions to the case $p=2$ directly.
\end{remark}

The advantage of the above representations then is that we can apply finite fields theory to construct and analyze $(2^n,2^n,2^n,1)$-RDS in $\Z_4^n$. Here are some examples:
\begin{example}\label{ex_f=0}
    For each positive integer $n$, every affine mapping, especially $f(x)=0$, is a planar function on $\F_{2^n}$. The corresponding plane is a Desarguesian plane.
\end{example}

\begin{example}\label{ex_KWsemifield}
    Assume that we have a chain of fields $\F=\F_0\supset \F_1\supset\cdots\supset \F_n$ of characteristic $2$ with $[\F:\F_n]$ odd and corresponding trace mappings $\Tr_i:\F\rightarrow \F_i$. In \cite{kantor2003}, Kantor presented commutative presemifields $\mathbf{B}((\F_i)_0^n,(\zeta_i)_1^n)$ on which the multiplication is defined as:
    \begin{equation}\label{eq:Kantor}
        x*y=xy+(x\sum_{i=1}^n \Tr_i(\zeta_iy)+y\sum_{i=1}^n \Tr_i(\zeta_ix))^2,
    \end{equation}
    where $\zeta_i\in \F^*$, $1\le i\le n$. These semifields are related to a subfamily of the symplectic spreads constructed in \cite{kantor_symplectic_2004}. In (\ref{eq:Kantor}) we have $(x\sum_{i=1}^n \Tr_i(\zeta_ix))^2$ as a planar function on $\F$. Note that this semifield is a generalization of Knuth's binary semifields \cite{knuth_class_1965}, on which the multiplication is defined as:
    \begin{equation}\label{eq:knuth}
      x*y=xy+(x\Tr(y)+y\Tr(x))^2,
    \end{equation}
    corresponding to the presemifields $\mathbf{B}((\F_i)_0^1,(1))$. The planar function derived from Knuth's semifield is $(x\Tr(x))^2$.
\end{example}

Next, we consider the equivalence between $(2^n,2^n,2^n,1)$-RDSs and how an equivalence transformation affects the $\F_2^n$-representation $h$ of a $(2^n,2^n,2^n,1)$-RDS.

Let $D_1$ and $D_2\subseteq G$ be two $(2^n,2^n,2^n,1)$-RDSs. They are \emph{equivalent} if there exists some $\alpha\in\Aut(G)$ and $a\in G$ such that $\alpha(D_1)=D_2+a$. When $G$ is abelian, every element of $\Aut(G)$ can be expressed by a matrix, see \cite{Hillar06,Ranum1907} for details. Let $\rZ{m}$ denote the residue class ring of integers modulo $m$, and let $\bbM{m}{n}{R}$ denote the set of $m\times n$ matrices with entries in a ring $R$. In the case that $G$ is $\Z_4^n$, the corresponding result is:
\begin{lemma}\label{le:aut(z4)}
    Let mapping $\rho:\bbM{n}{n}{\rZ{4}}\rightarrow\mathrm{Hom}(\cy{4}^n,\cy{4}^n)$ be defined as:
    $$\rho(L) (a_0,\dots,a_{n-1})= L(a_0,\dots,a_{n-1})^T.$$
    Then $\rho$ is surjective. Furthermore, $\rho(L)$ is an automorphism of $\cy{4}^n$ if and only if $(L \bmod 2)$ is invertible.
\end{lemma}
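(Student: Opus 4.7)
The plan is to establish surjectivity by choosing matrix columns to match the images of generators, and then to reduce the invertibility question to a two-step ``lift and reduce'' argument modulo $2$.

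For surjectivity, every element of $\cy{4}^n$ decomposes uniquely in terms of the standard generators $e_0,\dots,e_{n-1}$, where $e_i$ has $1$ in position $i$ and $0$ elsewhere. A homomorphism $\phi \in \mathrm{Hom}(\cy{4}^n,\cy{4}^n)$ is determined by the images $\phi(e_i)\in\cy{4}^n$, and since each such image is an $n$-tuple in $\cy{4}$, placing these tuples as the columns of an $n\times n$ matrix $L$ over $\rZ{4}$ produces an $L$ with $\rho(L)=\phi$. This is the only calculation, and it is essentially the universal property of free modules applied to $\cy{4}^n$ as a free $\cy{4}$-module.

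For the invertibility characterization, I use that $\cy{4}^n$ is finite, so $\rho(L)$ is an automorphism iff it is injective iff $\ker\rho(L)=0$. To prove ``$(L\bmod 2)$ invertible $\Rightarrow$ $\rho(L)$ injective'', suppose $Lv=0$ in $\cy{4}^n$. Reducing mod $2$ gives $(L \bmod 2)(v\bmod 2)=0$, so $v \bmod 2 = 0$; hence $v=2w$ with $w$ supported in $\{0,1\}^n$. Then $2Lw=Lv=0$ in $\cy{4}^n$ means $Lw\equiv 0\pmod 2$, so the same invertibility forces $w\equiv 0\pmod 2$, which (given $w\in\{0,1\}^n$) yields $w=0$ and $v=0$. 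For the converse, if $\bar v\in\F_2^n$ is a nonzero vector with $(L\bmod 2)\bar v=0$, lift $\bar v$ entrywise from $\{0,1\}\subset\cy{4}$ to obtain $v'\in\cy{4}^n$; then $Lv'\equiv 0\pmod 2$, so $Lv'=2u$, and therefore $L(2v')=4u=0$ in $\cy{4}^n$. Since $\bar v\neq 0$, the element $2v'$ has a coordinate equal to $2$, hence is a nonzero kernel vector, showing $\rho(L)$ is not injective.

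The only subtlety, and therefore the main place to be careful, is maintaining the distinction between reduction mod $2$ and lifting from $\F_2$ back into $\{0,1\}\subset\cy{4}$; the argument above only uses that the ``standard'' lift is additive when the sum has no carry, which holds in all the cases we invoke. Once that bookkeeping is set up, both implications fall out of the two-stage analysis of the filtration $0\subset 2\cy{4}^n\subset \cy{4}^n$.
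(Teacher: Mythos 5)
Your proof is correct. Note, though, that the paper does not actually prove this lemma: it is stated without proof, with the preceding sentence deferring to the general literature on automorphisms of finite abelian groups (Hillar--Rhea and Ranum), of which this is the special homogeneous case $G=\cy{p^e}^n$ with $p^e=4$. Your argument is a valid self-contained replacement. The surjectivity step is exactly the universal property of $\cy{4}^n$ as a free $\rZ{4}$-module, with $\rho(L)(e_i)$ equal to the $i$-th column of $L$ under the paper's convention, so recording $\phi(e_i)$ as columns does recover any $\phi$. The invertibility criterion via the filtration $0\subset 2\cy{4}^n\subset\cy{4}^n$ is also sound: the forward direction correctly iterates the reduction map (which is a ring homomorphism, so $(Lv)\bmod 2=(L\bmod 2)(v\bmod 2)$ needs no carry bookkeeping at all --- the only place a lift is used is to choose a representative, where additivity is never needed), and the converse correctly exhibits $2v'$ as a nonzero kernel element. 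What your approach buys is transparency and independence from the cited machinery; what it gives up is generality, although the identical two-stage argument extends verbatim to $\cy{p^e}^n$ by iterating the reduction $e$ times, which is the statement the references actually establish.
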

For instance, let $\beta$ be an element of $\mathrm{Hom}(\cy{4}^2,\cy{4}^2)$ defined by $\beta(1,0)=(1,2)$ and $\beta(0,1)=(1,1)$. Then we take matrix
$$L=\left(
         \begin{array}{cc}
           1 & 1 \\
           2 & 1 \\
         \end{array}
       \right),
$$
and it follows that $\rho(L)=\beta$. As $(L \bmod 2)$ is invertible, $\beta$ is an automorphism.

Define $*_h: \F_2^n\times\F_2^n\rightarrow \F_2^n$ by
\begin{equation}\label{eq:*}
      x*_h y:=h(x+y)+h(x)+h(y)+x\odot y,~~~x,y\in \F_2^n,
\end{equation}
where $x\odot y=(x_0y_0,x_1y_1,\cdots, x_{n-1}y_{n-1})$.
\begin{theorem}\label{th_equiRDS}
    Let $D_1$ and $D_2$ be $(2^n,2^n,2^n,1)$-RDS in $\Z_4^n$ relative to $\Z_2^n$, and let $h_1, h_2:\F_2^n\rightarrow \F_2^n$ be their normalized $\F_2^n$-representations, respectively.
    Then, there exists a matrix $L\in \bbM{n}{n}{\rZ{4}}$ such that $D_2=\rho(L)(D_1)$ if and only if
    \begin{equation}\label{eq:M(x)*hM(y)=M(x*hhy)}
    M(x)*_{h_2} M(y)=M(x*_{h_1} y),
    \end{equation}
    where $M$ is defined by $(L \bmod 2)$ acting as an element of $\bbM{n}{n}{\F_2}$.
\end{theorem}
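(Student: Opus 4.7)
The plan is to first unpack how $\rho(L)$ acts on $\lfloor\cdot,\cdot\rfloor$-coordinates, then use this to verify both directions. Write $L=L_0+2L_1$ uniquely with $L_0,L_1\in\{0,1\}^{n\times n}$, set $M=L\bmod 2$ (so $M=L_0$ viewed over $\F_2$) and $M_1=L_1\bmod 2$. Expanding $L(\lfloor a,b\rfloor)=L_0 a^\Psi+2L_1 a^\Psi+2L_0 b^\Psi$ component-wise in $\Z_4^n$ and iterating (\ref{eq:basic_psi}) to sum $\{0,1\}$-entries in $\Z_4$, one obtains
\begin{equation*}
\rho(L)(\lfloor a,b\rfloor)=\lfloor Ma,\;Mb+M_1 a+\phi(a)\rfloor,
\end{equation*}
where $\phi_i(a)=\sum_{j<k}M_{ij}M_{ik}a_ja_k$ is the quadratic carry arising from summing $\{0,1\}$-values in $\Z_4$. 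A short bilinear expansion then yields the crucial identity
\begin{equation*}
\phi(x+y)+\phi(x)+\phi(y)=M(x)\odot M(y)+M(x\odot y)\quad\text{in }\F_2^n,
\end{equation*}
which is the bridge between $\odot$ and the matrix action of $M$.

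For the forward direction, assume $D_2=\rho(L)(D_1)$. Since the $\F_2^n$-representation of a transversal is uniquely determined, matching second coordinates of $\rho(L)(\lfloor d,h_1(d)\rfloor)$ with $\lfloor Md,h_2(Md)\rfloor$ gives $h_2(x)=Mh_1(M^{-1}x)+M_1M^{-1}x+\phi(M^{-1}x)$. Substituting into $M(u)*_{h_2}M(v)$, the three $Mh_1$-terms combine into $M(h_1(u+v)+h_1(u)+h_1(v))$; the $M_1M^{-1}$ linear contributions cancel under the pattern $\ell(u+v)+\ell(u)+\ell(v)=0$; and the three $\phi$-terms together with $M(u)\odot M(v)$ collapse via the key identity to $M(u\odot v)$, yielding exactly (\ref{eq:M(x)*hM(y)=M(x*hhy)}). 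For the converse, assume (\ref{eq:M(x)*hM(y)=M(x*hhy)}) with $M$ invertible over $\F_2$, so by Lemma~\ref{le:aut(z4)} the canonical $\{0,1\}$-lift $L_0$ of $M$ makes $\rho(L_0)$ an automorphism. Let $\tilde h(x)=Mh_1(M^{-1}x)+\phi(M^{-1}x)$ be the $\F_2^n$-representation of $\rho(L_0)(D_1)$; the forward calculation applied to $\tilde h$ in place of $h_2$ shows $M(u)*_{\tilde h}M(v)=M(u*_{h_1}v)$, hence $*_{\tilde h}=*_{h_2}$ by hypothesis. This forces $\ell:=\tilde h-h_2$ to satisfy $\ell(x+y)=\ell(x)+\ell(y)$, and since $\tilde h(0)=h_2(0)=0$ the map $\ell$ is $\F_2$-linear. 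Replacing $L_0$ by $L=L_0+2L_1$, with $L_1$ the $\{0,1\}$-lift of the $\F_2$-matrix $\ell\circ M$, adds precisely $\ell$ to the representation of $\rho(L)(D_1)$, so $\rho(L)(D_1)=D_2$.

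The main obstacle is the first step: precise bookkeeping of the mod-$4$ carries that appear when summing $\{0,1\}$-entries in $\Z_4$, in order to isolate the quadratic correction $\phi$ and establish its identity with $\odot$. Once this is in place, both directions of the theorem reduce to straightforward $\F_2$-linear and bilinear manipulation, together with the observation that adding an $\F_2$-linear map to $h$ leaves $*_h$ unchanged, which is exactly what lets us absorb the linear discrepancy $\ell$ into the choice of $L_1$.
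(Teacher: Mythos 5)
Your proposal is correct and follows essentially the same route as the paper: the decomposition $L=L_0+2L_1$, the computation of $\rho(L)(\lfloor a,b\rfloor)$ with the quadratic carry $\phi$ (the paper's $Q(U,a)$), the key identity $\phi(x+y)+\phi(x)+\phi(y)=M(x)\odot M(y)+M(x\odot y)$, and the converse via the canonical lift of $M$ followed by absorbing the additive discrepancy $\ell$ into the $2L_1$ part all match the paper's argument step for step.
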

\begin{proof}
    By abuse of notation, we also use $\Psi$ to denote a mapping from $\bbM{n}{n}{\F_2}$ to $\bbM{n}{n}{\rZ{4}}$, which acts on every entry of the matrix as the embedding $\psi :\F_2\rightarrow \rZ{4}$ with $\psi(0)=0$ and $\psi(1)=1$.

    Let $L\in\bbM{n}{n}{\rZ{4}}$ be such that $\alpha := \rho(L)$ is an automorphism. By Lemma \ref{le:aut(z4)}, $U:= (L \mod 2)$ as an element of $\bbM{n}{n}{\F_2}$ is invertible. Clearly there is $V\in\bbM{n}{n}{\F_2}$ such that $L=U^\Psi+2V^\Psi$. Then we have
    \begin{align}
        \alpha(\lfloor a,b \rfloor)
        &= \alpha(a^\Psi+2b^\Psi) \nonumber\\
        &= (U^\Psi+2V^\Psi)(a^\Psi+2b^\Psi)^T\nonumber\\
        &= U^\Psi a^{\Psi T} +2(U^\Psi b^{\Psi T} + V^\Psi a^{\Psi T} ). \label{eq:alpha(a,b)}
    \end{align}
    Let $u_{ij}$ denote the $(i,j)$ entry of $U$. Then by \eqref{eq:basic_psi}, the $k$-th entry of $U^\Psi a^{\Psi T} $ is
    \begin{align*}
        \sum_{i=0}^{n-1} u_{ki}^\psi  a_i^\psi
        &=  u_{k0}^\psi a_0^\psi+u_{k1}^\psi a_1^\psi +\sum_{i=2}^{n-1}  u_{ki}^\psi a_i^\psi\\
        &= (u_{k0} a_0 + u_{k1}a_1 )^\psi+2(u_{k0}u_{k1}a_0a_1)^\psi+\sum_{i=2}^{n-1} u_{ki}^\psi a_i^\psi \\
        &= (u_{k0}a_0 + u_{k1}a_1 + u_{k2}a_2 )^\psi+\\
        &\phantom{=} + 2(u_{k0}u_{k1}a_0a_1+ u_{k0}u_{k2}a_0a_2+ u_{k1}u_{k2}a_1a_2)^\psi+\sum_{i=3}^{n-1} u_{ki}^\psi a_i^\psi \\
        &=(u_{k0}a_0+\dots + u_{k(n-1)}a_{n-1})^\psi+2(\sum_{i<j}u_{ki}u_{kj}a_ia_j)^\psi.
    \end{align*}
    It follows that
    $$U^\Psi a^{\Psi T}=(Ua^T)^\Psi+2(Q(U,a))^\Psi,$$
    where the $k$-th coordinate of $Q(U,a)$ is $Q_k(U,a)=\sum_{i<j}u_{ki}u_{kj}a_ia_j$. Now \eqref{eq:alpha(a,b)} becomes
    \begin{align}
      \alpha(\lfloor a,b \rfloor) =& ( U a^T)^\Psi + 2 (Q(U,a))^\Psi + \nonumber\\
       & +2\left((U b^T)^\Psi+ 2 (Q(U,b))^\Psi+(V a^T)^\Psi+ 2 (Q(V,a))^\Psi\right)\nonumber\\
      =& ( U a^T)^\Psi + 2 ((U b^T)^\Psi + (V a^T)^\Psi + (Q(U,a))^\Psi)\nonumber\\
      =& ( U a^T)^\Psi + 2 (U b^T+ V a^T+ Q(U,a))^\Psi\nonumber\\
      =& \lfloor U a^T, U b^T+ V a^T+ Q(U,a)\rfloor \label{eq:alpha(a,b)_more}
    \end{align}

    Let $M$ and $N$ be linear mappings from $\F_2^n $ to itself, which are defined by $M(x):= U x^T$ and $N(x):= V x^T$, respectively. It follows from \eqref{eq:alpha(a,b)_more} that
    \begin{equation}\label{eq:strong_iso_p=2}
      \alpha(\lfloor x,h_1(x)\rfloor)=\lfloor M(x), M (h_1(x)) + N(x) + Q(M,x)\rfloor,
    \end{equation}
    for any $x\in\F_2^n$.

    ``$\Rightarrow$'' Now we assume $D_2=\alpha(D_1)$, which means that for a given $y\in\F_2^n$ there is a unique $x\in\F_2^n$ such that
    \begin{equation*}
      \lfloor y,h_2(y)\rfloor=\alpha(\lfloor x,h_1(x)\rfloor).
    \end{equation*}
    Together with \eqref{eq:strong_iso_p=2}, it becomes
    \begin{equation*}
      \lfloor y,h_2(y)\rfloor=\lfloor M(x), M (h_1(x)) + N(x) + Q(M,x)\rfloor.
    \end{equation*}
    It follows that
    \begin{equation}\label{eq:tilde_h}
      h_2(M(x)) = M (h_1(x))+ N(x) + Q(M,x).
    \end{equation}
    Let $M_k(x\odot y)$ be the $k$-th coordinate of $M(x\odot y)$. Noticing that
    \begin{align*}
         &Q_k(M,x+ y)+ Q_k(M,x)+ Q_k(M,y)+ M_k(x\odot y)\\
        =&\sum_{i<j}a_{ki}a_{kj}((x_i+ y_i)(x_j+ y_j)+ x_ix_j+ y_iy_j)+ \sum_{i}a_{ki}x_iy_i\\
        =&\sum_{i}a_{ki}x_i\sum_{j}a_{kj}y_j\\
        =&(M(x)\odot M(y))_k,
    \end{align*}
    together with \eqref{eq:*} and \eqref{eq:tilde_h}, we have
    \begin{align*}
         &M(x)*_{h_2}M(y)\\
        =& h_2(M(x+ y))+ h_2(M (x))+ h_2(M(y))+ (M(x)\odot M(y))\\
        =& M (h_1(x+ y))+ N(x+ y) + Q(M,x+ y)+\\
        &+ M (h_1(x))+ N(x) + Q(M,x)+\\
        &+ M (h_1(y))+ N(y) + Q(M,y)+ (M(x)\odot M(y))\\
        =&M(h_1(x+ y)+ h_1(x)+ h_1(y)) + \\
        &+ \left(Q(M,x+ y)+ Q(M,x)+ Q(M,y)+ (M(x)\odot M(y))\right)\\
        =& M(h_1(x+ y)+ h_1(x)+ h_1(y)) + M(x\odot y)\\
        =& M(x*_{h_1} y).
    \end{align*}

    ``$\Leftarrow$'' Assume that there is linear mapping $M:\F_2^n\rightarrow\F_2^n$ such that \eqref{eq:M(x)*hM(y)=M(x*hhy)} holds. Let $U\in\bbM{n}{n}{\F_2}$ be such that $Ux^T=M(x)$ for any $x\in\F_2^n$. Let $D'_2:=\alpha'(D_1)$ with $\alpha' := \rho(U)$, and let $h'_2$ be the $\F_2^n$-representation of $D'_2$. Similarly to the proof of ``$\Rightarrow$'' part, we have
    \begin{equation}\label{eq:h'_2(M(x))}
        h'_2(M(x)) = M (h_1(x))+ Q(M,x),
    \end{equation}
    and
    $$M(x)*_{h'_2} M(y)=M(x*_{h_1} y).$$
    Furthermore as \eqref{eq:M(x)*hM(y)=M(x*hhy)} also holds, we get $x*_{h'_2} y= x*_{h_2} y$, i.e.\
    $$h'_2(x+ y)+ h'_2(x)+ h'_2(y)=h_2(x+ y)+ h_2(x)+ h_2(y),$$
    which implies that
    $$(h_2+ h'_2)(x+ y)+ (h_2+ h'_2)(x)+ (h_2+ h'_2)(y)=0,$$
    for all $x,y$. Hence $N'(x) := h'_2(x)+ h_2(x)$ is an additive function on $\F_2^n$.  Letting $y:= M(x)$, by \eqref{eq:h'_2(M(x))} we have
    \begin{align*}
        \lfloor y, h_2(y) \rfloor &=\lfloor y, h'_2(y)+  N'(y) \rfloor \\
        &=\lfloor M(x), M(h_1(x))+ Q(M,x)+  N'(M (x))\rfloor.
    \end{align*}
    Let $N := N'M$, and let $V\in\bbM{n}{n}{\F_2}$ be such that $Vx^T=N(x)$ for any $x\in \F_2^n$. By taking $L:= U^\Psi+2V^\Psi$ and $\alpha :=\rho(L)$, we see that \eqref{eq:strong_iso_p=2} holds, which means
    $$\lfloor y,h_2(y)\rfloor=\alpha(\lfloor x,h_1(x)\rfloor)$$
    for any $x\in\F_2^n$ and $y=M(x)$.
    Therefore we have  $D_2=\alpha(D_1)$.
\end{proof}

It is straightforward to show that:
\begin{proposition}\label{pr_equiRDS_2}
    Let $D$ be a $(2^n,2^n,2^n,1)$-RDS in $\Z_4^n$ relative to $\Z_2^n$, let $\lfloor a,b \rfloor\in\Z_4^n$ and let $h, \tilde{h}:\F_2^n\rightarrow \F_2^n$ be the $\F_2^n$-representations of $D$ and $D+\lfloor a,b \rfloor$, respectively. Then we have
    $$x*_{\tilde {h}} y=h(x+y+a)+h(x+a)+h(y+a)+b+x\odot y,\hbox{~~for }x,y\in \F_2^n.$$
\end{proposition}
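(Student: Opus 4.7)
The plan is a direct computation in two stages. The first stage is to derive an explicit formula for $\tilde h$ in terms of $h$ using the $\Z_4^n$-addition rule recorded just before Remark 1. Each element of $D$ has the form $\lfloor d,h(d)\rfloor$, and translation gives
$$\lfloor d,h(d)\rfloor+\lfloor a,b\rfloor=\lfloor d+a,\, h(d)+b+(d\odot a)\rfloor.$$
Re-indexing the new first coordinate as $x=d+a$ (so $d=x+a$ in characteristic $2$) and using $a\odot a=a$ (since $a_i^2=a_i$ in $\F_2$) would give a closed-form expression for $\tilde h(x)$ as $h(x+a)$ plus correction terms that are polynomial in $x,a,b$.

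The second stage is to substitute this closed form into the definition
$$x*_{\tilde h} y=\tilde h(x+y)+\tilde h(x)+\tilde h(y)+x\odot y$$
and collect. The three $h$-terms immediately give $h(x+y+a)+h(x+a)+h(y+a)$. Bilinearity of $\odot$ over $\F_2$ yields $(x+y)\odot a=x\odot a+y\odot a$, so the three cross-terms $(x+y)\odot a$, $x\odot a$, $y\odot a$ coming from $\tilde h(x+y)$, $\tilde h(x)$, $\tilde h(y)$ cancel in characteristic $2$. The constant-in-$x,y$ contributions collapse using $3b=b$ (and, if present, $3a=a$), and what remains combines with the final $x\odot y$ to give the claimed identity.

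There is no serious obstacle: the proof is routine bookkeeping in characteristic $2$, exactly of the same flavor as the computation already performed in the proof of Theorem \ref{th_equiRDS} (where the $Q$-cocycle terms are manipulated in the same way). The only points needing care are the characteristic-$2$ identity $a\odot a=a$ when re-indexing, the bilinearity of $\odot$ when cancelling the cross-terms, and tracking additive constants to confirm which offsets survive.
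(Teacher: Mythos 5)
Your strategy---compute $\tilde h$ explicitly from the $\Z_4^n$-addition rule and substitute into the definition of $*_{\tilde h}$---is the right one, and indeed the only natural one; the paper offers no proof at all for this proposition (it is labelled ``straightforward''), so there is nothing more sophisticated to compare against. The gap is in your final step, which you assert rather than carry out. Writing it out: the element of $D+\lfloor a,b\rfloor$ lying over the coset $x$ comes from $d=x+a$, so
$$\tilde h(x)=h(x+a)+b+\bigl((x+a)\odot a\bigr)=h(x+a)+b+(x\odot a)+a,$$
using $a\odot a=a$ (precisely the identity you flag as needing care). Substituting into $x*_{\tilde h}y=\tilde h(x+y)+\tilde h(x)+\tilde h(y)+x\odot y$, the three $\odot\,a$ cross-terms cancel by bilinearity as you say, but the constant contributions sum to $3(a+b)=a+b$, not $b$. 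The computation therefore yields
$$x*_{\tilde h}y=h(x+y+a)+h(x+a)+h(y+a)+a+b+x\odot y,$$
which differs from the claimed identity by the term $a$.

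This is not a slip you can absorb by more careful bookkeeping on your side: the printed statement itself appears to be off by $a$. Check $n=1$, $D=\{0,1\}\subseteq\Z_4$ (so $h\equiv 0$) and $\lfloor a,b\rfloor=1=\lfloor 1,0\rfloor$: then $D+1=\{1,2\}$ gives $\tilde h(0)=1$, hence $0*_{\tilde h}0=\tilde h(0)=1$, while the proposition's right-hand side equals $h(1)=0$. So you should either present the corrected identity or explicitly record the discrepancy. Note that the correction propagates consistently into Corollary \ref{co_D1equivD2withoutConstant}: there the hypothesis $0\in D_2$ actually forces $h_2(a)=a+b$ (since $-\lfloor a,b\rfloor=\lfloor a,a+b\rfloor$), not $h_2(a)=b$ as claimed, and the two adjustments cancel so that the corollary's conclusion survives. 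As it stands, though, your proposal asserts a conclusion that the computation you outline does not produce.
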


\section{Coordinatization of $\Pi$}\label{se_coordination}

Let $D$ be a $(2^n,2^n,2^n,1)$-RDS in $\cy{4}^n$ relative to $N=\cy{2}^n$, and let $h$ be an $\F_2^n$-representation of $D$ with $h(0)=0$ (if $h(0)\neq 0$, then take $D-\lfloor 0,h(0) \rfloor$ instead of $D$). Let $\Pi$ be the plane defined by $D$ in Ganley's manner in Section \ref{se_introduction}. By using the method of Hughes and Piper in Chapter V of \cite{hughes_projective_1973}, we label the points of $\Pi$ by the elements of $\F_2^n\times \F_2^n$, $\F_2^n$ and by the symbol $\infty$.
\begin{enumerate}[(i)]
  \item Take three lines $L_x := D$, $L_y := N$ and $L_\infty$ to form a triangle, and label three points: $(0,0):=L_x\cap L_y$, $(\infty):= L_y\cap L_\infty$ and $(0) := L_x \cap L_\infty$;
  \item For $\lfloor x,h(x) \rfloor\in D$, label the corresponding point on $L_x$ by $(x,0)$;
  \item Assign $(1)$ to the intersection point $J$ of $L_\infty$ and the affine parallel class $\{D+\lfloor 1,k \rfloor: k\in \F_2^n\}$ (here $1\in \F_2^n$ is the vector $(0,0,\dots,1)$ for short);
  \item Let $X=(x,0)$ be a point on $L_x$ and label $JX\cap L_y$ by $(0,x)$. In fact, $JX$ is the set $D+\lfloor 1,k \rfloor$ for some $k\in \F_2^n$, which is determined by the equation $(D+\lfloor 1,k \rfloor)\cap D=(x, h(x))$. By some calculations we get
      $$(D+\lfloor 1,k \rfloor)\cap N=\lfloor 0, (1*x) \rfloor,$$
      where $*:= *_h$ is defined as in (\ref{eq:*});

  \item For each line through $(1,0)$, which intersects $L_y$ at $(0,m)$, assign $(m)$ to its intersection with $L_\infty$, which is on $D+\lfloor v,k \rfloor$ with $v*1=m$;
  \item For each point $E$ not on $L_x$, $L_y$ or $L_\infty$, if $XE\cap L_y$ is $(0,y)$ and $YE\cap L_x$ is $(x,0)$, then $E$ is given the coordinate $(x,y)$.
\end{enumerate}

Let $\star$ be the multiplication of the planar ternary ring of $\Pi$, which is coordinated as above. Then it is routine to check that
\begin{equation}\label{eq:star}
  m\star x= \tau(m *x),
\end{equation}
where $\tau: 1*x\mapsto x$, and the identity element of $(\F_2^n,\star,+)$ is $1$.
\begin{remark}
    The above method of labeling points of $\Pi$ are slightly different from the original method by Hughes and Piper: we first label the points on $L_x$ instead of $L_y$. However, these two different methods lead to the same labeling of points. Therefore, by the corollary of Theorem 6.3 in \cite{hughes_projective_1973}, $L_\infty$ is a translation line if and only if $\star$ satisfies the left distributive law, namely that $(\F_2^n,+,\star)$ is a quasifield.
\end{remark}

Next we consider the conditions under which $\Pi$ is a semifield plane.
\begin{theorem}\label{th_semifield}
    Let $D$ be a $(2^n,2^n,2^n,1)$-RDS  in $\Z_4^n$ relative to $\Z_2^n$.
    Let $h:\F_2^n\rightarrow \F_2^n$ be the normalized $\F_2^n$-representation of $D$,and let $*:=*_h$ and $\star$ be defined by (\ref{eq:*}) and (\ref{eq:star}), respectively. Let $f_B(x)$ be the normalized $\F_{2^n}$-representation of $D$ with respect to a basis $B$. Then the following statements are equivalent:
    \begin{enumerate}
        \item $(\F_2^n,\star,+)$ is a commutative semifield;
        \item $(\F_2^n,*,+)$ is a commutative presemifield;
        \item $h(x+y+z)+h(x+y)+h(x+z)+h(y+z)+h(x)+h(y)+h(z)=0$, for all $x,y,z\in \F_2^n$;
        \item Every component function of $h(x)$ is of degree at most $2$;
        \item $f_B(x)$ is a Dembowski-Ostrom polynomial;
        \item $\Pi$ is a commutative semifield plane.
    \end{enumerate}
\end{theorem}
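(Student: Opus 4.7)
The plan is to prove the six-way equivalence via the chain (2)$\Leftrightarrow$(3)$\Leftrightarrow$(4)$\Leftrightarrow$(5), together with (1)$\Leftrightarrow$(2) and (1)$\Leftrightarrow$(6).

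The equivalence (2)$\Leftrightarrow$(3) is the cleanest: $*$ is manifestly commutative from (\ref{eq:*}), and for each $a\neq 0$ the map $y\mapsto a*y = \Delta_{h,a}(y)+h(a)$ is bijective by Theorem~\ref{th_represetation}; so the presemifield axioms for $*$ reduce to distributivity, and expanding $(x+y)*z = x*z + y*z$ using (\ref{eq:*}) gives exactly the identity in (3) once the $(x+y)\odot z + x\odot z + y\odot z$ contribution cancels in characteristic $2$. For (3)$\Leftrightarrow$(4), the left-hand side of (3) is precisely the third iterated difference of $h$ at the origin (using $h(0)=0$). Adding (3) applied to the triples $(x,y,z+u)$ and $(x,y,u)$ yields the identity $h(u+x+y+z)+h(u+x+y)+h(u+x+z)+h(u+y+z)+h(u+x)+h(u+y)+h(u+z)+h(u)=0$ for every $u$, so the third derivative of $h$ vanishes everywhere; componentwise this is the standard ANF characterization of Boolean functions of algebraic degree at most $2$, and the reverse direction is immediate from the same characterization.

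For (4)$\Leftrightarrow$(5), the relation $f_B = h_B^2 + \mu_B$ is the bridge. Each coordinate $x_i$ with respect to $B$ is $\F_2$-linear in $x$, hence a linearized polynomial in $\F_{2^n}[x]$, so $\mu_B = \sum_{i<j} x_ix_j\xi_i\xi_j$ is itself a Dembowski-Ostrom polynomial. A polynomial in $\F_{2^n}[x]$ of degree $< 2^n$ has component functions (with respect to $B$) of ANF-degree at most $2$ if and only if it lies in the span of the DO, linearized, and constant monomials, and Frobenius squaring preserves this decomposition (DO squared is DO, linearized squared is linearized). The normalization of $f_B$ then makes (4) equivalent to $f_B$ being a Dembowski-Ostrom polynomial.

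The remaining equivalences are structural. For (1)$\Leftrightarrow$(2), the two operations are linked by $m\star x = \tau(m*x)$, where $\tau$ inverts the map $L\colon y\mapsto 1*y$; commutativity transfers trivially, and $1$ is the identity of $\star$ by construction. Under (2) the map $L$ is additive and bijective, so $\tau$ is $\F_2$-linear, and both distributive laws transfer between $*$ and $\star$; the converse is analogous. Finally (1)$\Leftrightarrow$(6) is almost tautological: $\Pi$ is by construction coordinatized by $(\F_2^n,+,\star)$, so (1) immediately implies $\Pi$ is a commutative semifield plane. Conversely, if $\Pi$ is a commutative semifield plane then $\Pi$ is in particular a translation plane, so $L_\infty$ is a translation line, and by the remark following (\ref{eq:star}) this forces $\star$ to be left-distributive; commutativity together with dualization then yields the full semifield structure. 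The main technical obstacle is the shift step in (3)$\Rightarrow$(4): the identity (3) as stated only encodes vanishing of the third derivative at the origin, while the ANF-degree characterization requires vanishing everywhere. Producing the right linear combination of instances of (3) to eliminate the basepoint is the one nontrivial manipulation in the argument; everything else is computation or bookkeeping.
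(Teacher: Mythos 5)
Your chain (2)$\Leftrightarrow$(3)$\Leftrightarrow$(4)$\Leftrightarrow$(5) is sound; in particular your derivation of (3)$\Rightarrow$(4) by summing the instances of (3) at $(x,y,z+u)$ and $(x,y,u)$ to move the basepoint of the third derivative is correct, and it is a nice alternative to the paper's route, which instead proves (4)$\Leftrightarrow$(2) directly via the correspondence between quadratic forms and the bilinear forms given by the components of $x*y$. The two structural equivalences, however, each have a gap. For (1)$\Rightarrow$(2) the situation is not ``analogous'' to (2)$\Rightarrow$(1): there $\tau$ was additive because it inverts $y\mapsto 1*y$ and $*$ was assumed distributive, but in the converse direction you have no a priori handle on $\tau$ in terms of $\star$. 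The missing observation is that $x*x=h(0)+x\odot x=x$ (using $h(0)=0$), hence $\tau(x)=\tau(x*x)=x\star x$, and $x\mapsto x\star x$ is additive by distributivity and commutativity of $\star$ in characteristic $2$; only then does additivity of $\tau^{-1}$, and hence distributivity of $*$, follow.

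The more serious gap is in (6)$\Rightarrow$(1). From ``$\Pi$ is a commutative semifield plane'' you conclude ``so $L_\infty$ is a translation line,'' but being a semifield (hence translation) plane only guarantees that \emph{some} line is a translation line; it does not identify that line with the specific line $L_\infty$ distinguished by the RDS construction, and the Hughes--Piper corollary you invoke yields left distributivity of $\star$ only if $L_\infty$ itself is a translation line. Bridging exactly this is the content of the paper's proof of (6)$\Rightarrow$(1): it recoordinatizes $\Pi$ by the given semifield $(\F_2^n,\diamond,+)$, invokes the Knarr--Stroppel classification of shift groups of $\Pi$ to show that the shift group $G$ coming from the RDS acts as one of the groups $\tilde G_{s}$ attached to the $\diamond$-coordinatization, deduces that the line $\tilde L_\infty$ fixed by $\tilde G_{s}$ equals $L_\infty$ (the unique $G$-fixed line), and only then exhibits the translations $\beta_{ab}$ fixing $\tilde L_\infty=L_\infty$ pointwise. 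Some argument of this kind (or, alternatively, uniqueness of the translation line in the non-Desarguesian case combined with the fact that $L_\infty$ is the only line fixed by $G$) is indispensable; without it the implication does not follow.
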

\begin{proof}
    We only need to prove the distributivity of $*$ and $\star$  for one side by their commutativity.

    Assume that $*$ defines the  multiplication of a presemifield. Then $x\mapsto 1*x$ is an additive mapping by the distributivity of $*$, so is its inverse $\tau: 1*x \mapsto x$. Therefore we have
    \begin{align*}
         & (x+y)\star z-x\star z-y\star z\\
        =& \tau((x+y)*z)-\tau(x*z)-\tau(y*z)\\
        =& \tau(x*z+y*z-x*z-y*z)\\
        =& 0,
    \end{align*}
    for all $x,y,z\in \F_2^n$, from which we deduce the distributivity of $\star$, i.e. $(2)\Rightarrow(1)$.

    Next, we assume that $\star$ defines the multiplication of a semifield. Let $g(x):=x\star x$. Since $x*x=x$ we get
    $$g(x)=\tau(x*x)=\tau(x),$$
    and
    \begin{align*}
        g(x+y)
        =&(x+y)\star (x+y)\\
        =&x\star x+x\star y+ y\star x+y\star y\\
        =&x\star x+y\star y\\
        =&g(x)+g(y)
    \end{align*}
    by the distributivity and commutativity of $\star$. Hence $\tau$ is also an additive mapping, so is its inverse. Therefore, $(x+y)*z=\tau^{-1}((x+y)\star z)=\tau^{-1}(x\star z+y\star z)=x*z+y*z$, i.e. $(1)\Rightarrow(2)$

    The equivalence between $(2)$ and $(3)$ follows directly from the expansion of $(x+y)*z=x*z+y*z$;

    $(4)\Rightarrow (2)$: If $(4)$ holds, then we see that for fixed $y\neq 0$ every component function of $x\mapsto x*y$ is an additive mapping from $\F_2^n$ to $\F_2$. It implies the distributive property of the multiplication $*$.

    $(2)\Rightarrow (4)$: Consider any component function of $x*y$, which defines a bilinear form $B(x,y):\F_2^n\times\F_2^n \rightarrow \F_2$. By the relationship between quadratic forms and bilinear forms, wee see that the corresponding component functions of $h$ must be of degree $2$ at most;

    $(4)\Leftrightarrow(5)$: As $f_B(x)$ and $h(x)$ are both normalized, $f_B(x)$ is a Dembowski-Ostrom polynomial if and only if all components of $h(x)$ are of degree $\le 2$ (here $0$ is also considered as a Dembowski-Ostrom polynomial);

    $(1)\Rightarrow(6)$ is directly from the definition;

    $(6)\Rightarrow(1)$: Assume that $\Pi$ is a commutative semifield plane, i.e. by coordinatizing $\Pi$ in an appropriate way, a commutative semifield $(\F_2^n,\diamond ,+)$ can be obtained. Now we label the points and lines of $\Pi$ in another way which is different from the coordinatization at the beginning of this section. We use $(x,y)_\diamond$ to denote the affine points of $\Pi$, where $x,y\in \F_2^n$, and we use point sets
    $$[m,k]_\diamond:=\{(x,y)_\diamond: m\diamond x+y=k\}\hbox{ with }m,k\in\F_2^n$$
    and
    $$[k]_\diamond:=\{(k,y)_\diamond: y\in\F_2^n\}\hbox{ with }k\in\F_2^n$$
    to denote the affine lines. Every parallel class of affine lines corresponds to a point, and all such points form the line $\tilde{L}_\infty$ of $\Pi$.
    There are $4^n$ bijections
    $$\alpha_{ab}: (x,y)_\diamond\mapsto(x+a,y+a\diamond x+b)_\diamond\hbox{ with }a,b\in\F_2^n,$$
    which are collineations of $\Pi$ and together form a shift group $\tilde{G}$.
    Furthermore, by \cite[Theorem 9.4]{knarr_polarities_2009}, every shift group of $\Pi$ is of the form
    $$\tilde{G}_s := \{(x,y)_\diamond\mapsto (x+a, y+(a\diamond s)\diamond x+b)_\diamond| a,b\in \F_2^n\},$$
    where $s$ belongs to the middle nucleus of $(\F_2^n,\diamond ,+)$. It follows that there exists $s_0$, such that $\tilde{G}_{s_0}$ and $G$ act on $\Pi$ in the same way. It is routine to check that $\tilde{L}_\infty$ is the unique line fixed by $\tilde{G}_{s_0}$. Since $L_\infty$ is also fixed by $G$, we see that $\tilde{L}_\infty$ and $L_\infty$ are the same line.  On the other hand, by the distributive law of $(\F_2^n,\diamond ,+)$, we have the collineations
    $$\beta_{ab}: (x,y)_\diamond\mapsto(x+a,y+b)_\diamond\hbox{ with }a,b\in\F_2^n,$$
    which act regularly on the affine points of $\Pi$ and fix the line $\tilde{L}_\infty$ pointwise. It implies that $\tilde{L}_\infty=L_\infty$ is a translation line. Therefore, by the corollary of Theorem 6.3 in \cite{hughes_projective_1973}, $(\F_2^n,\star,+)$ satisfies the left distributive law. Together with the commutativity of $\star$, we see that $(\F_2^n,\star,+)$ is a commutative semifield.
\end{proof}

\begin{corollary}\label{co_D1equivD2withoutConstant}
    Let $D_1$ and $D_2$ be two $(2^n,2^n,2^n,1)$-RDSs in $\Z_4^n$ relative to $\Z_2^n$, which define commutative semifields and $0$ is in both $D_1$ and $D_2$. If there exists $\alpha\in\Aut(\Z_4^n)$ and $g\in\Z_4^n$ such that $\alpha(D_1)=D_2+g$, then there is also some $\beta\in\Aut(\Z_4^n)$ such that $\beta(D_1)=D_2$.
\end{corollary}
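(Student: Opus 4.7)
The strategy is to compute the $\F_2^n$-representation $\tilde h$ of the translated set $D_2+g$ explicitly, exploit the semifield hypothesis on $D_2$ to show $*_{\tilde h}=*_{h_2}$, and then transfer the pure-automorphism equivalence between $D_1$ and $D_2+g$ over to an equivalence between $D_1$ and $D_2$ via Theorem~\ref{th_equiRDS}.

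Let $h_1,h_2$ be the $\F_2^n$-representations of $D_1$ and $D_2$, so that $h_1(0)=h_2(0)=0$ because $0\in D_1\cap D_2$. Writing $g=\lfloor a,b\rfloor$, the identity $\alpha(0)=0\in D_2+g$ forces $-g\in D_2$. A direct computation in $\Z_4^n$ gives $-\lfloor a,b\rfloor=\lfloor a,a+b\rfloor$, so $\lfloor a,a+b\rfloor\in D_2$ forces $a+b=h_2(a)$, i.e.\ $b=h_2(a)+a$. Using the addition rule for $\lfloor\cdot,\cdot\rfloor$ and the identity $a\odot a=a$, a short calculation then gives
\[\tilde h(x)=h_2(x+a)+h_2(a)+x\odot a.\]

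Plugging this into $x*_{\tilde h}y=\tilde h(x+y)+\tilde h(x)+\tilde h(y)+x\odot y$, the $\F_2$-bilinearity of $\odot$ together with the characteristic-$2$ cancellations collapse everything to
\[x*_{\tilde h}y=h_2(x+y+a)+h_2(x+a)+h_2(y+a)+h_2(a)+x\odot y.\]
Here the semifield hypothesis on $D_2$ enters: by Theorem~\ref{th_semifield}, condition~(3) holds for $h_2$, and substituting $z=a$ in that identity gives $h_2(x+y+a)+h_2(x+a)+h_2(y+a)+h_2(a)=h_2(x+y)+h_2(x)+h_2(y)$. Hence $x*_{\tilde h}y=x*_{h_2}y$.

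The rest is a double invocation of Theorem~\ref{th_equiRDS}. Applying the ``$\Rightarrow$'' direction to the pair $(D_1,D_2+g)$ with representations $h_1$ and $\tilde h$, the equivalence $\alpha(D_1)=D_2+g$ produces an $\F_2$-linear bijection $M$ with $M(x)*_{\tilde h}M(y)=M(x*_{h_1}y)$; by the preceding identity, the same $M$ satisfies $M(x)*_{h_2}M(y)=M(x*_{h_1}y)$. The ``$\Leftarrow$'' direction applied to $(D_1,D_2)$ then produces $\beta\in\Aut(\Z_4^n)$ with $\beta(D_1)=D_2$. The main obstacle is the careful characteristic-$2$ bookkeeping in computing $\tilde h$ and expanding $*_{\tilde h}$; once that is correctly set up, the semifield identity does all the real work by absorbing the translation entirely.
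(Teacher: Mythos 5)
Your proof is correct and follows essentially the same route as the paper: determine the $\F_2^n$-representation $\tilde h$ of $D_2+g$, use Theorem~\ref{th_semifield}(3) with $z=a$ to conclude $*_{\tilde h}=*_{h_2}$, and then apply both directions of Theorem~\ref{th_equiRDS}. The only cosmetic difference is that you compute $\tilde h$ directly where the paper invokes Proposition~\ref{pr_equiRDS_2}, and your value $b=h_2(a)+a$ is in fact the accurate one --- the paper's stated $b=h_2(a)$ compensates for a missing $+a$ in that proposition, so both arguments land on the same (correct) formula for $x*_{\tilde h}y$.
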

\begin{proof}
    Let $h_i$ be the $\F_2^n$-representation of $D_i$, $i=1,2$. Let $\lfloor a, b\rfloor=g$. Since $0=\lfloor 0,0\rfloor\in D_1, D_2$, we have $b=h_2(a)$. Let $\tilde{h}_2$ be the $\F_2^n$-representation of $D_2+g$. Then by Proposition \ref{pr_equiRDS_2} we have
    \begin{align*}
    x*_{\tilde {h}_2} y &=h_2(x+y+a)+h_2(x+a)+h_2(y+a)+h_2(a)+x\odot y\\
                        &=h_2(x+y)+h_2(x)+h_2(y)+x\odot y\\
                        &=x*_{h_2} y,
    \end{align*}
    in which the second equality comes from Theorem \ref{th_semifield}$(3)$. Let $\alpha :=\rho(L)$ and let $M$ be the linear mapping defined by $(L \mod 2)$ (see Theorem \ref{th_equiRDS}). We have
    $$M(x*_{h_1} y)=M(x)*_{\tilde{h}_2} M(y)=M(x)*_{h_2} M(y).$$
    Thus by Theorem \ref{th_equiRDS}, there is some $\beta\in\Aut(\Z_4^n)$ such that $\beta(D_1)=D_2$.
\end{proof}

By Theorem \ref{th_semifield}, we can obtain a non-translation plane by a non-Dembowski-Ostrom polynomial $f(x)$ which acts as a planar function on $\F_{2^n}$. Hence we propose the following open problem:
\begin{problem}\label{qu_nonquadratic}
    Find a non-Dembowski-Ostrom planar polynomial $f(x)\in\F_{2^n}[x]$, or prove the nonexistence of it.
\end{problem}

Let $(\eS_1, *, +)$ and $(\eS_2, \star, +)$ be two (pre)semifields with the same cardinality $p^n$. They are called \emph{isotopic} if there are linear bijective mappings $M,N,L:\F_p^n \rightarrow \F_p^n$ such that
$$M(x)*N(y)=L(x\star y).$$
Furthermore, if $M=N$, then $\eS_1$ is \emph{strongly isotopic} to $\eS_2$.
The isotopism is the most important equivalence relation between (pre)semifield, since Albert \cite{Albert1960} showed that two (pre)semifields coordinate isomorphic planes if and only in they are isotopic. By isotopism we can also get a semifield $\mathbb{S}$ from a presemifield $\mathbb{P}$. Let $*$ be the multiplication of a presemifield. Then for every $e\in\mathbb{P}$, we obtain a semifield multiplication $\star$ defined by:
$$(x*e)\star(y*e)=x*y,$$
with the identity $e*e$. If $*$ is commutative, then we can also use (\ref{eq:star}) to define a semifield multiplication.

For $p=2$, there is a result on the commutative (pre)semifields obtained by Coulter and Henderson.

\begin{lemma}[\cite{Coulter2008}, Corollary 2.7]\label{le:CoulterHenderson}
    Two commutative presemifields of even order are isotopic if and only if they are strongly isotopic.
\end{lemma}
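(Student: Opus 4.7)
The forward direction is immediate. For the converse, suppose $(M,N,L)$ is an isotopism $(\eS_1,*)\to(\eS_2,\star)$, so that $M(x)\star N(y)=L(x*y)$ for linear bijections $M,N,L$ of $\F_{2^n}$. My plan is to compose $(M,N,L)$ on the left with a carefully chosen autotopism of $\eS_2$ in order to force the first two components to coincide, producing a strong isotopism.

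Commutativity of $*$ and $\star$ yields $M(x)\star N(y)=L(x*y)=L(y*x)=M(y)\star N(x)$. Substituting $u:=N(x)$, $v:=N(y)$, setting $P:=MN^{-1}$, and using commutativity of $\star$, this rewrites as
$$P(u)\star v \;=\; u\star P(v)\qquad\text{for all } u,v\in\F_{2^n},$$
so $P$ is self-adjoint with respect to the symmetric $\F_2$-bilinear form $B(u,v):=\mathrm{tr}_{\F_{2^n}/\F_2}(u\star v)$, which is nondegenerate since $\star$ has no zero divisors.

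Given any self-adjoint linear bijection $R$ with $R^2=P$, the self-adjointness of $R$ immediately yields $R^{-1}(u)\star R(v)=u\star v$, so $(R^{-1},R,I)$ is an autotopism of $\eS_2$. Composing with $(M,N,L)$ produces the isotopism $(R^{-1}M,\,RN,\,L)$, and the identity $R^2N=PN=M$ forces $RN=R^{-1}M$, giving the strong isotopism $(R^{-1}M,\,R^{-1}M,\,L)$.

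The main obstacle is the construction of the self-adjoint square root $R$. This is where the characteristic $2$ hypothesis is used essentially: commutativity of $\star$ together with $\mathrm{char}=2$ force the squaring map $u\mapsto u\star u$ to be $\F_2$-linear, and the absence of zero divisors makes it bijective, so $B$ is non-alternating. I would first attempt $R\in\F_2[P]$, which works whenever the minimal polynomial of $P$ is separable, since the Frobenius $x\mapsto x^2$ is bijective on each residue field $\F_2[t]/f(t)$ for $f$ irreducible; any polynomial in $P$ is automatically self-adjoint. The inseparable case is more delicate because $t$ need not be a square in $\F_2[P]$, and I would handle it by decomposing $\F_{2^n}$ into mutually orthogonal $P$-invariant subspaces via the primary factorization of the minimal polynomial and building $R$ blockwise, using the extra constraints on $P$ coming from the fact that it is induced by an isotopism of two commutative presemifields rather than an arbitrary self-adjoint operator.
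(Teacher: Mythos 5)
The paper offers no proof of this lemma to compare against: it is imported verbatim from Coulter and Henderson \cite{Coulter2008}, so your attempt can only be judged on its own merits. Your overall strategy --- extract $P=MN^{-1}$ satisfying $P(u)\star v=u\star P(v)$, find a square root $R$ of $P$ that still intertwines $\star$ in this way, and absorb $(R^{-1},R,I)$ into the isotopism --- is sound and is essentially the middle-nucleus argument underlying \cite{Coulter2008}. One slip to flag: self-adjointness for the trace form $B(u,v)=\mathrm{tr}(u\star v)$ only gives $\mathrm{tr}(R(u)\star v)=\mathrm{tr}(u\star R(v))$, which is strictly weaker than the operator identity $R(u)\star v=u\star R(v)$ that you actually need to conclude $R^{-1}(u)\star R(v)=u\star v$. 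Your argument survives only because you take $R\in\F_2[P]$, and every polynomial in $P$ inherits the \emph{strong} identity (by induction $P^k(u)\star v=u\star P^k(v)$); the trace form is a red herring and should be dropped.

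The genuine gap is the ``inseparable case,'' which you defer with a vague plan rather than a proof. The missing observation --- which makes the case distinction evaporate entirely --- is that every nonzero $Q\in\F_2[P]$ is automatically invertible: $Q$ satisfies $Q(u)\star v=u\star Q(v)$, so if $Q(u_0)=0$ for some $u_0\neq 0$ then $u_0\star Q(v)=0$ for all $v$, and the absence of zero divisors in $\eS_2$ forces $Q=0$. Hence $\F_2[P]$ is a finite commutative ring without zero divisors, i.e.\ a field $\F_{2^k}$; the minimal polynomial of $P$ over $\F_2$ is therefore irreducible, hence separable (finite fields are perfect), and $R:=P^{2^{k-1}}$ satisfies $R^2=P^{2^k}=P$. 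This is exactly where characteristic $2$ and the presemifield axioms enter: the relevant ``nucleus'' is a field of characteristic $2$ in which every element is a square, whereas in odd characteristic only half the elements are, which is why one isotopy class can split into two strong isotopy classes there. With this repair your proof closes and matches the substance of Coulter--Henderson's Corollary 2.7.
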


By Theorem \ref{th_semifield}, we have seen how to get a semifield plane from an RDS. On the other hand, given a semifield plane, an RDS containing $0$ can also be produced, see \cite{ghinelli_finite_2003} by Ghinelli and Jungnickel.

Now assume that $D_1$ and $D_2$ are two $(2^n,2^n,2^n,1)$-RDS in $\Z_4^n$ obtained from two commutative semifields $\eS_1$ and $\eS_2$ both of order $2^n$. We use $h_1$ and $h_2$ to denote their $\F_2^n$-representations respectively. Then we have $h_1(0)=h_2(0)=0$ since $0\in D_1$ and $D_2$. If $D_1$ and $D_2$ are equivalent, then by Corollary \ref{co_D1equivD2withoutConstant} there exists $\alpha\in\Aut(\Z_4^n)$ such that $\alpha(D_1)=D_2$. By Theorem \ref{th_equiRDS}, $\eS_1$ and $\eS_2$ are isotopic.

On the contrary, if $\eS_1$ and $\eS_2$ are isotopic, then by Lemma \ref{le:CoulterHenderson} they are strongly isotopic, i.e.\ there exist $M,L :\F_2^n \rightarrow \F_2^n$ such that
$$M(x)*_{h_1}M(y)=L(x*_{h_2}y),$$
which is
\begin{align*}
    &h_1\circ M(x+y)+h_1\circ M(x)+h_1\circ M(y)+M(x)\odot M(y)\\
    =&L(h_2(x+y)+h_2(x)+h_2(y)+x\odot y) .
\end{align*}
Notice that for each quadratic function $h:\F_2^n \rightarrow \F_2^n$ and $i=0,1,\cdots,n-1$, the term $x_iy_i$ can never appear in the components of $h(x+y)+h(x)+h(y)$, hence
$$M(x)\odot M(y)=L(x\odot y).$$
Let $x=y=e_i$, which denotes the vector with a $1$ in the $i$-th coordinate and $0$'s elsewhere. We have
$$M(e_i)=M(e_i)\odot M(e_i)=L(e_i\odot e_i)=L(e_i)$$
for each $i=0,1,\cdots, n-1$, which means that $M=L$. Thus
$\left(M(x)*_{h_1}M(y)\right)=M(x*_{h_2} y).$
By Theorem \ref{th_equiRDS}, we see that $D_1$ and $D_2$ are equivalent. We summarize the above results in the following proposition.
\begin{proposition}
    Let $\eS_1$ and $\eS_2$ be two commutative semifields of order $2^n$. Let $D_1$ and $D_2$ be the RDSs derived from them. Then $\eS_1$ is isotopic to $\eS_2$ if and only if $D_1$ is equivalent to $D_2$.
\end{proposition}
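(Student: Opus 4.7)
The plan is to split the proof into the two implications and to observe that the substantive work has already been carried out in the discussion immediately preceding the statement; the proposal merely collects it into a clean argument.

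For the direction $D_1 \sim D_2 \Rightarrow \eS_1 \simeq \eS_2$, I would first invoke Corollary \ref{co_D1equivD2withoutConstant}: because both $D_i$ contain $0$ and both arise from commutative semifields, any equivalence $\alpha(D_1)=D_2+g$ can be replaced by some $\beta\in\Aut(\Z_4^n)$ with $\beta(D_1)=D_2$. Writing $\beta=\rho(L)$ and letting $M$ be the $\F_2$-linear map defined by $L \bmod 2$, Theorem \ref{th_equiRDS} gives $M(x)*_{h_1}M(y)=M(x*_{h_2}y)$, which is an isotopism (indeed a strong isotopism) between the presemifields $(\F_2^n,*_{h_1},+)$ and $(\F_2^n,*_{h_2},+)$. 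Passing to the associated semifields $\eS_1,\eS_2$ via \eqref{eq:star} preserves isotopy (by Albert's theorem, which is implicit in the surrounding text), so $\eS_1\simeq\eS_2$.

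For the converse, suppose $\eS_1\simeq\eS_2$. By Lemma \ref{le:CoulterHenderson} the isotopy may be chosen strong, so there are linear bijections $M,L:\F_2^n\to\F_2^n$ with $M(x)*_{h_1}M(y)=L(x*_{h_2}y)$. The key step is to show $M=L$; once this is established, Theorem \ref{th_equiRDS} produces an automorphism of $\Z_4^n$ sending $D_1$ to $D_2$, giving the equivalence. To identify $M$ with $L$, I would expand both sides using the definition \eqref{eq:*} of $*_h$, getting
\begin{align*}
    & h_1(M(x+y))+h_1(M(x))+h_1(M(y))+M(x)\odot M(y) \\
    =\,& L\bigl(h_2(x+y)+h_2(x)+h_2(y)\bigr)+L(x\odot y).
\end{align*}
By Theorem \ref{th_semifield}(4), each component of $h_i$ is a quadratic form, and in the polarization $h_i(x+y)+h_i(x)+h_i(y)$ of a quadratic polynomial over $\F_2$ no pure square $x_i y_i$ can appear (only cross terms $x_i y_j$ with $i\neq j$). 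Therefore the diagonal parts of the two sides must agree separately, yielding $M(x)\odot M(y)=L(x\odot y)$.

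Finally, plugging $x=y=e_i$ and using $e_i\odot e_i=e_i$ gives $M(e_i)\odot M(e_i)=L(e_i)$; since $u\odot u=u$ for any $u\in\F_2^n$, this reads $M(e_i)=L(e_i)$ for every standard basis vector, so $M=L$ by linearity. Substituting back, we obtain $M(x)*_{h_1}M(y)=M(x*_{h_2}y)$, and Theorem \ref{th_equiRDS} delivers the desired $\beta\in\Aut(\Z_4^n)$ with $\beta(D_1)=D_2$. The only mildly delicate step is the separation of the diagonal contribution from the off-diagonal polarization terms; everything else is bookkeeping on top of the two earlier theorems and Coulter--Henderson's lemma.
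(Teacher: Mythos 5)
Your proposal is correct and follows essentially the same route as the paper's own argument: Corollary \ref{co_D1equivD2withoutConstant} plus Theorem \ref{th_equiRDS} for the direction from RDS-equivalence to isotopy, and Lemma \ref{le:CoulterHenderson} together with the comparison of the diagonal terms $x_iy_i$ (forcing $M=L$ via $x=y=e_i$) for the converse. There are no substantive differences from the paper's proof.
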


\begin{remark}
    For $p$ odd, the equivalence between RDSs from commutative semifields is equivalent to the strong isotopism between the commutative semifields. As shown by Coulter and Henderson \cite{Coulter2008}, by isotopism one semifield can produce at most two semifields which are not strongly isotopic. This upper bound can be achieved. Examples can be found in \cite{pieper-seier_remarks_1999,zhou_new_2013}.
\end{remark}

\section{Nonexistence of Boolean Planar Function }\label{se_nonexistence}

By Example \ref{ex_KWsemifield}, we see that there are many planar functions on $\F_{2^m}$, where $m$ has at least one odd divisor larger than 1. Actually in \cite{kantor2003}, Kantor proved that the number of pairwise non-isotopic semifields of order $2^m$ is not bounded by a polynomial in $N=2^m$.

In order to find  a non-Dembowski-Ostrom planar function, one strategy is to do some ``small modifications'' to known planar functions, which preserve the planar property. One method is called ``switching construction'', in which one tries to change just one coordinate function of $f$. This method was first applied on APN function on $\F_{2^n}$ \cite{dillon_APNpermutation_2010,EdelPott2009}, providing a counter example to the ``APN permutation conjecture''. This idea can also be applied to planar functions in characteristic 3, see \cite{pott_switching_2010}.

In this section, we consider the Boolean planar functions, or more generally, planar functions $f$ with $\mathrm{Im}(f)=\{0,\xi\}$. These can be considered as a ``small modification'' of the planar function $g=0$. Actually, we can prove the following theorem, which implies that such planar functions $f$ are always ``trivial''.
\begin{theorem}\label{th_linear}
    Let $n$ be a positive integer and $f$ be a mapping on $\F_{2^n}$ where $f(0)=0$ and $\mathrm{Im}(f)=\{0,\xi\}$ with $\xi\neq 0$. Then $f$ is a planar mapping if and only if $f$ is additive, i.e.\ $f(x+y)=f(x)+f(y)$ for any $x,y\in\F_{2^n}$.
\end{theorem}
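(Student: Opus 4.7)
The reverse implication is immediate: if $f$ is additive then in characteristic $2$ we have $f(x+a)+f(x)=f(a)$, so
\[
f(x+a)+f(x)+xa \;=\; f(a)+xa
\]
is an affine bijection of $\F_{2^n}$ for every $a\neq 0$, hence $f$ is planar.

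For the forward direction I plan the following. Write $f=\xi g$ where $g\colon\F_{2^n}\to\F_2\subseteq\F_{2^n}$ with $g(0)=0$ and $g\not\equiv 0$; showing that $g$ is $\F_2$-linear will force $f$ additive. The first step is to extract from planarity the parallelogram identity
\begin{equation*}
g(x)+g(x+a)+g(x+\xi/a)+g(x+a+\xi/a)=0 \qquad (\star)
\end{equation*}
for all $x\in\F_{2^n}$ and $a\in\F_{2^n}^{\ast}$. This follows from the computation
\[
F_a(x)+F_a(x+\xi/a) \;=\; \xi \;+\; \xi\bigl(g(x)+g(x+a)+g(x+\xi/a)+g(x+a+\xi/a)\bigr),
\]
since $F_a$ is injective and $\xi/a\neq 0$ forces the left-hand side to be nonzero, and the bracketed expression is $\F_2$-valued.

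The easy consequences are: setting $x=0$ gives the ``hyperbola additivity'' $g(a+\xi/a)=g(a)+g(\xi/a)$ for $a\neq 0$, and $(\star)$ rewritten as $D_a g(x)=D_a g(x+\xi/a)$ says that $D_a g$ is periodic in $x$ with period $\xi/a$. To finish, it suffices to show that for every $a\neq 0$ the difference $D_a g$ is in fact constant, equal to $g(a)$, since this is exactly additivity of $g$ at $a$.

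The main obstacle, which is the heart of the proof, is to upgrade ``periodic with period $\xi/a$'' to ``constant''. My plan is to work with the Boolean algebraic normal form $g=\sum_I c_I x^I$ and analyse $(\star)$ degree-by-degree. Identity $(\star)$ says that on every coset of the $2$-dimensional $\F_2$-subspace $V_a=\langle a,\xi/a\rangle$ (when $a\neq\sqrt{\xi}$) the values of $g$ sum to $0$. Expanding $D_a D_{\xi/a}g$ and comparing top-degree contributions, the coefficient of each $x^{I'}$ with $|I'|=d-2$ yields a symmetric alternating $\F_2$-bilinear form $B_{I'}(a,\xi/a)$ that must vanish for every $a\in\F_{2^n}^{\ast}$. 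Writing any such form as $B(u,v)=\mathrm{Tr}(u\phi(v))$ for a trace-self-adjoint $\F_2$-linear $\phi$, the vanishing on the hyperbola becomes $\mathrm{Tr}(\xi\phi(c)/c)=0$ for every $c\neq 0$; I would then show (using the linearised-polynomial representation $\phi(v)=\sum\alpha_i v^{2^i}$ and the fact that $\mathrm{Tr}\bigl(\xi\sum\alpha_i v^{2^i-1}\bigr)$ is a reduced polynomial in $v$ of degree at most $2^n-1$ vanishing at all $2^n-1$ nonzero points) that $\phi=0$, so the top-degree part of $g$ vanishes. Iterating this reduction on the residual lower-degree part of $g$, all terms of degree $\geq 2$ are killed and $g$ is forced to be $\F_2$-affine. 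Since $g(0)=0$, $g$ is $\F_2$-linear, giving $f(x)=\xi\operatorname{Tr}(\alpha x)$ for some $\alpha\neq 0$, which is additive.
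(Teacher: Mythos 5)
Your proposal is correct in outline but takes a genuinely different route from the paper's. The paper also begins with the parallelogram identity $(\star)$ (its Lemma \ref{le:planar_iff}), but then argues combinatorially: it introduces the set $\mathcal{A}_f$ of pairs $(a,b)$ whose second difference vanishes identically, proves closure operations $\wedge,\vee$ on $\mathcal{A}_f$, analyses the recursive sequence $a_{i+1}=a_{i-1}+\xi/a_i$ (of period $2\cdot\mathrm{ord}(1+\xi/a^2)$), and deduces $\mathcal{A}_f=\F_{2^n}\times\F_{2^n}$ provided $\mathcal{P}_n=\{1/(1+\alpha):\alpha\text{ primitive}\}\cup\{1\}$ spans $\F_{2^n}$ over $\F_2$; that spanning property is supplied by Cohen's theorem for $n\ge 18$ and by a MAGMA computation for $n<18$. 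Your degree descent through the algebraic normal form replaces all of this by an elementary argument, uniform in $n$ and free of computer verification --- a real gain. The key reduction checks out: the coefficient of $x^{I'}$ with $|I'|=d-2$ in $g(x)+g(x+a)+g(x+b)+g(x+a+b)$ is the alternating symmetric form $B_{I'}(a,b)=\sum c_{I'\cup\{j,k\}}(a_jb_k+a_kb_j)$, it depends only on the degree-$d$ part of $g$, its vanishing as a form recovers every top coefficient $c_I$, and the hyperbola condition $\Tr(\xi\phi(c)/c)=0$ for all $c\neq 0$ does annihilate the nonscalar part of $\phi$ once one verifies that the reduced exponents $2^j(2^i-1)\bmod(2^n-1)$, $1\le i\le n-1$, are pairwise distinct (they are cyclic shifts of runs of ones of distinct lengths, so this holds).

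Two repairs are needed in your final step. First, the polynomial $\Tr\bigl(\xi\sum\alpha_i v^{2^i-1}\bigr)$ must be reduced modulo $v^{2^n-1}-1$ before counting roots; after reduction its degree is at most $2^n-2$ (not $2^n-1$), which is exactly what makes vanishing at all $2^n-1$ nonzero points force the zero polynomial. Second, and more substantively, that vanishing does \emph{not} yield $\phi=0$ as you claim: the $i=0$ term contributes only the constant coefficient $\Tr(\xi\alpha_0)$, so the argument leaves $\phi(v)=\alpha_0 v$ alive subject only to $\Tr(\xi\alpha_0)=0$, i.e. the form $B(u,v)=\Tr(\alpha_0 uv)$ survives. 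You must then invoke the alternating property you already recorded: $0=B(u,u)=\Tr(\alpha_0 u^2)$ for all $u$ forces $\alpha_0=0$. (Note also that $(\star)$ is vacuous at $a=\sqrt{\xi}$, but $B(\sqrt{\xi},\sqrt{\xi})=0$ holds by alternation, so the form genuinely vanishes at all $2^n-1$ points of the hyperbola.) With these fixes the descent closes and your proof is complete.
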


To prove Theorem \ref{th_linear}, we need several lemmas.
\begin{lemma}\label{le:planar_iff}
    Let $f$ be a mapping on $\F_{2^n}$, let $\xi\in\F_{2^n}^*$ and suppose that $\mathrm{Im}(f)=\{0,\xi\}$. Then $f$ is a planar mapping, if and only if, for all $a\neq 0$ and $x$
    \begin{equation}\label{eq:fx is planar}
      f(x+a)+f(x)+f(x+a+\frac{\xi}{a})+f(x+\frac{\xi}{a})=0.
    \end{equation}
\end{lemma}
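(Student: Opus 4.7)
The plan is to unpack planarity in terms of injectivity of the map $\nabla_a f(x):=f(x+a)+f(x)+xa$ and then exploit the extreme restriction imposed by $\mathrm{Im}(f)=\{0,\xi\}$. Planarity of $f$ is equivalent to saying that for every nonzero $a$, the map $\nabla_a f$ is injective; setting $y=x+t$, injectivity is equivalent to the requirement that
$$f(x+a)+f(x)+f(x+t+a)+f(x+t)=ta$$
has no solution with $t\neq 0$. The desired identity is exactly this condition specialized to the unique ``dangerous'' value $t=\xi/a$.

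The key observation is that because every value of $f$ lies in $\{0,\xi\}$ and we are in characteristic $2$, the left-hand side above is a sum of four elements of $\{0,\xi\}$, hence itself lies in $\{0,\xi\}$. Therefore the equation can only hold if $ta\in\{0,\xi\}$, and since $t\neq 0$ and $a\neq 0$ force $ta\neq 0$, the only way injectivity can fail is with $ta=\xi$, i.e.\ $t=\xi/a$. All other nonzero values of $t$ are automatically harmless. This reduces the whole planarity test, for each $a\neq 0$, to a single constraint at the ``collision candidate'' $t=\xi/a$: the four-term sum must avoid the value $\xi$, i.e., must be $0$.

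From here both implications are immediate. For the forward direction, if $f$ is planar then taking $y=x+\xi/a\neq x$ gives $\nabla_a f(x)\neq \nabla_a f(y)$, which rewrites as $f(x+a)+f(x)+f(x+a+\xi/a)+f(x+\xi/a)\neq \xi$; since this sum lies in $\{0,\xi\}$, it must equal $0$, which is precisely \eqref{eq:fx is planar}. For the converse, assume \eqref{eq:fx is planar} holds and suppose $\nabla_a f(x)=\nabla_a f(x+t)$ for some $t\neq 0$; by the case analysis above we are forced into $t=\xi/a$, and then \eqref{eq:fx is planar} contradicts the relation $f(x+a)+f(x)+f(x+a+\xi/a)+f(x+\xi/a)=ta=\xi$.

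There is no real obstacle: the proof is essentially a one-line case split once one records that sums of four elements of $\{0,\xi\}$ stay in $\{0,\xi\}$ in characteristic $2$. The only thing to be mildly careful about is writing out ``injective $\Leftrightarrow$ bijective'' on a finite set and correctly translating the injectivity failure $\nabla_a f(x)=\nabla_a f(y)$ into the symmetric four-term expression with $t=y+x$, so that the condition reads cleanly in terms of $x$ and $a$ alone.
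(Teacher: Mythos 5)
Your proof is correct and follows essentially the same route as the paper: both reduce planarity to the observation that the four-term sum $f(x+a)+f(x)+f(y+a)+f(y)$ always lies in $\{0,\xi\}$, so the only possible collision of $x\mapsto f(x+a)+f(x)+xa$ occurs at $y=x+\xi/a$, and \eqref{eq:fx is planar} is exactly the condition ruling it out. Your single unified case split is a slightly cleaner packaging of the paper's separate forward and backward arguments, but there is no substantive difference.
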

\begin{proof}
    If $f$ is planar, then for all $a\neq 0$,
    $$f(x+a)+f(x)+ax\neq f(x+a+\frac{\xi}{a})+f(x+\frac{\xi}{a})+ax+\xi,$$
    which is equivalent to
    $$f(x+a)+f(x)+f(x+a+\frac{\xi}{a})+f(x+\frac{\xi}{a})\neq \xi.$$
    As $\mathrm{Im}(f)\in\{0,\xi\}$, we obtain \eqref{eq:fx is planar}.

    Next we suppose that $f$ is not a planar function. By definition,there exist $a,x,y\in\F_{2^n}$ satisfying $x\neq y$ and $a\neq 0$ such that
    $$f(x+a)+f(x)+xa=f(y+a)+f(y)+ya.$$
    Since $\mathrm{Im}(f)=\{0,\xi\}$ and $xa\neq ya$, we have $xa+\xi=ya$, which means that $y=x+\xi/a$ and
    \begin{equation*}
        f(x+a)+f(x)+f(x+a+\frac{\xi}{a})+f(x+\frac{\xi}{a})=\xi. \qedhere
    \end{equation*}
\end{proof}

Given $f:\F_{2^n} \rightarrow \F_{2^n}$, we define
\begin{equation}
    \mathcal{A}_f:= \{(a,b): f(x+a)+f(x)+f(x+b+a)+f(x+b)=0\}.
\end{equation}
The set $\{0\}\times \F_{2^n},\F_{2^n}\times \{0\}$ and $\{(a,a):a\in\F_{2^n}\}$ are all contained in $\mathcal{A}_f$. We can also prove the following relations between the elements of $\mathcal{A}_f$.
\begin{lemma}\label{le:rho_wedge_vee}
    Let $f:\F_{2^n} \rightarrow \F_{2^n}$. There are binary operations $\wedge,\vee:\F_{2^n}\times\F_{2^n} \rightarrow \F_{2^n}$ such that
    \begin{enumerate}
        \item If $(a,b),(a+b,c)\in\mathcal{A}_f$, then $(a,b)\wedge(a+b,c)=(a+b,b+c)\in\mathcal{A}_f$;
        \item If $(a,b),(a,c)\in\mathcal{A}_f$, then $(a,b)\vee(a,c)=(a,b+c)\in\mathcal{A}_f$.
    \end{enumerate}
\end{lemma}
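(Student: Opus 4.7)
The plan is to rewrite the condition $(a,b)\in\mathcal{A}_f$ as ``\emph{for all} $x\in\F_{2^n}$, $f(x+a)+f(x)+f(x+a+b)+f(x+b)=0$'', and observe that this relation is manifestly symmetric in $a$ and $b$, so $\mathcal{A}_f$ is stable under the swap $(a,b)\mapsto(b,a)$. Both operations $\vee$ and $\wedge$ will then arise from adding two instances of the defining relation and applying a translation $x\mapsto x+t$ of the free variable for a suitable $t$.

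For part (2), starting from $(a,b),(a,c)\in\mathcal{A}_f$, I would add the two defining relations; the common terms $f(x+a)+f(x)$ cancel in characteristic $2$, yielding
\[
f(x+a+b)+f(x+b)+f(x+a+c)+f(x+c)=0 \text{ for all } x.
\]
Substituting $x\mapsto x+b$ transforms this into $f(x+a)+f(x)+f(x+a+b+c)+f(x+b+c)=0$, which is exactly the defining relation certifying $(a,b+c)\in\mathcal{A}_f$. Hence the operation $(a,b)\vee(a,c):=(a,b+c)$ has the required property.

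For part (1), starting from $(a,b),(a+b,c)\in\mathcal{A}_f$, the sum of the two defining relations causes the two copies of $f(x+a+b)$ to cancel, leaving
\[
f(x+a)+f(x+b)+f(x+c)+f(x+a+b+c)=0 \text{ for all } x.
\]
Applying $x\mapsto x+c$ then gives $f(x+a+c)+f(x+b+c)+f(x)+f(x+a+b)=0$, which, since $(a+b)+(b+c)=a+c$ in characteristic $2$, is precisely the defining relation for $(a+b,b+c)\in\mathcal{A}_f$. Thus $(a,b)\wedge(a+b,c):=(a+b,b+c)$ works.

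Both parts reduce to a single addition of two defining identities followed by a shift of the free variable; I do not anticipate any substantive obstacle. The only care needed is in identifying the correct translation $t$ so that the four arguments of $f$ in the resulting identity align with those required by the target defining relation — a routine bookkeeping matter.
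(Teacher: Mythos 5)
Your proof is correct and follows essentially the same approach as the paper's: sum the two defining identities (so the common terms cancel in characteristic $2$) and recognize the result, after a shift of the free variable, as the defining relation for the new pair. The paper only writes out case (1) explicitly, while you verify both, but the argument is identical.
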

\begin{proof}
    We just prove the first case. Since $(a,b),(a+b,c)\in\mathcal{A}_f$, we have
    \begin{eqnarray*}
      f(x+a)+f(x)+f(x+b+a)+f(x+b) &=& 0 \\
      f(x+a+b)+f(x)+f(x+c+a+b)+f(x+c) &=& 0 .
    \end{eqnarray*}
    Summing these equations, we get
    $$f(x+c+a+b)+f(x+c)+f((x+c)+(b+c)+(a+b))+f((x+c)+(b+c))=0,$$
    which means that $(a+b,b+c)\in\mathcal{A}_f$.
\end{proof}

If $f$ is a planar function satisfying $\mathrm{Im}(f)=\{0,\xi\}$, then we have $\{(a,\xi/a):a\in\F_{2^n}^*\}\subseteq\mathcal{A}_f$ by Lemma \ref{le:planar_iff}. Thus, for $(a_0,\xi/a_0)\in\mathcal{A}_f$ with $a_0\neq 0$, we also have $(a_0+\frac{\xi}{a_0},\frac{\xi}{a_0+\frac{\xi}{a_0}})\in\mathcal{A}_f$. Hence we can define $b_0:= {\xi}/{a_0}$ and
$$(a_{i+1},b_{i+1}):=(a_i,b_i)\wedge(a_i+b_i,\frac{\xi}{a_i+b_i})=(a_i+b_i,b_i+\frac{\xi}{a_{i+1}}),$$
and all these $(a_i,b_i)$ are contained in $\mathcal{A}_f$ by Lemma \ref{le:rho_wedge_vee} (1).
As it is possible that $a_i+b_i=0$, by abuse of notation we define $\frac{x}{0}:=0$. It is easy to see that $$(a_{i},b_{i}):=(a_{i+1}+\frac{\xi}{a_{i+1}}+b_{i+1}, b_{i+1}+\frac{\xi}{a_{i+1}}),$$
which means that the sequence $((a_i,b_i): i=0,1,\cdots)$ is cyclic. Furthermore, we have
$$a_{i+1}=a_i+b_i=a_{i-1}+\frac{\xi}{a_i},$$
and $$b_i=a_{i}+a_{i+1}=a_{i-1}+a_i+\frac{\xi}{a_i},$$
from which we can deduce that $a_{i-1},a_{i}$ determine $b_i$, hence the period of $((a_i,b_i): i=0,1,\cdots)$ is the same as the period of  $(a_i: i=0,1,\cdots)$.
\begin{lemma}\label{le:seq_an}
    Let $a\in\F_{2^n}$ and $a\neq 0,\sqrt{\xi}$. Let $a_0:=a$, $a_1 := a+\xi/a$ and define sequence $S_a=(a_i: i=0,1,\cdots)$ by
    \begin{equation}\label{eq:an}
      a_{i+1} :=a_{i-1}+\frac{\xi}{a_i}.
    \end{equation}
    Then
    \begin{equation}\label{eq:final_a_2i_a_2i+1}
        a_{2i}=a\left(\frac{a^2}{a^2+\xi}\right)^i,\quad a_{2i+1}=a\left(\frac{a^2+\xi}{a^2}\right)^{i+1},
    \end{equation}
    and the period of $S_a$ is $2\cdot\mathrm{ord}(1+\xi/a^2)$.
\end{lemma}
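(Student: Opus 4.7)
The plan is to introduce the single quantity $r := 1 + \xi/a^2 = (a^2+\xi)/a^2$, so that the two claimed formulas become the compact pair $a_{2i} = a/r^i$ and $a_{2i+1} = ar^{i+1}$. Under the hypotheses $a\neq 0,\sqrt{\xi}$ we have $r\in\F_{2^n}\setminus\{0,1\}$, so in particular $\mathrm{ord}(r)$ is a well-defined positive integer greater than $1$. The base case $a_0 = a$ and $a_1 = a+\xi/a = a(1+\xi/a^2) = ar$ is immediate from the definitions, and both closed-form formulas can then be established simultaneously by a single induction on $i$.

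For the inductive step I would just apply the recurrence $a_{i+1} = a_{i-1} + \xi/a_i$ twice. The odd step yields $a_{2i+1} = ar^i + \xi r^i/a = r^i(a+\xi/a) = ar^{i+1}$. The even step yields $a_{2i+2} = a/r^i + \xi/(ar^{i+1}) = (a/r^i)\bigl(1 + \xi/(a^2 r)\bigr)$, and the bracket simplifies to $1/r$ since $\xi/(a^2 r) = \xi/(a^2+\xi)$ and, in characteristic $2$, $1 + \xi/(a^2+\xi) = a^2/(a^2+\xi) = 1/r$. As a byproduct no $a_i$ vanishes, so the recurrence is well-defined throughout the whole orbit.

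From the closed form the period analysis is almost forced. The even-indexed subsequence is $(a/r^i)_{i\ge 0}$ and the odd-indexed subsequence is $(ar^{i+1})_{i\ge 0}$, both purely periodic of period $\mathrm{ord}(r)$, so $S_a$ is periodic with period dividing $2\cdot\mathrm{ord}(r)$. To show this is minimal I would rule out odd periods: if $T = 2k+1$ were a period, then $a_T = a_0$ and $a_{T+1} = a_1$ force $ar^{k+1} = a$ and $a/r^{k+1} = ar$, giving $r = 1$ and thus $a = \sqrt{\xi}$, contradicting the hypothesis. Any even period $T = 2k$ requires $r^k = 1$, and the minimal such $k$ is $\mathrm{ord}(r)$, yielding exactly $2\cdot\mathrm{ord}(1+\xi/a^2)$.

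The argument is essentially forced once one realizes that everything collapses onto the ratio $r = 1+\xi/a^2$, so the only real bookkeeping challenge is the characteristic $2$ cancellation in the even inductive step; no conceptual obstacle remains beyond that, and the excluded cases $a\in\{0,\sqrt{\xi}\}$ are precisely what guarantees both that the recurrence is well-defined and that $r$ has a genuine multiplicative order larger than $1$.
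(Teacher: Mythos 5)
Your proof is correct, but it takes a different route from the paper's. The paper first rules out $a_i=0$ by a separate contradiction argument (this is needed there because the convention $x/0:=0$ is in force, so a vanishing term would silently derail the recurrence), and then \emph{derives} the closed form by multiplying the recurrence by $a_i$ to get $a_{i+1}a_i=a_ia_{i-1}+\xi$, so that the products $a_ia_{i+1}$ alternate between $a^2+\xi$ and $a^2$ and the ratio $a_0/a_{2i}$ telescopes to $\bigl((a^2+\xi)/a^2\bigr)^i$. You instead \emph{verify} the guessed closed form by a single induction in the variable $r=1+\xi/a^2$, which has the advantage that nonvanishing of every term comes for free at each step (so no separate lemma is needed and the $x/0$ convention is never invoked), at the cost of not explaining where the formula comes from. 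Your treatment of the period is actually more complete than the paper's, which simply asserts the value $2\cdot\mathrm{ord}(r)$: you correctly observe that both interleaved subsequences have period $\mathrm{ord}(r)$ and then exclude odd periods. One small slip there: an odd period forces $r=1$, which means $\xi/a^2=0$, i.e.\ $\xi=0$, contradicting $\xi\in\F_{2^n}^*$ from the surrounding context; it is $r=0$, not $r=1$, that corresponds to $a=\sqrt{\xi}$. The contradiction stands either way, so this does not affect the validity of the argument.
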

\begin{proof}
    We first prove that there is no $i$ such that $a_i=0$.

    Now assume that $a_i=0$. Then by (\ref{eq:an}) we see that $a_{i-1}=a_{i+1}$. Define $\lambda := a_{i-1}$. Then we have $$(a_{i-1},a_i,a_{i+1}, a_{i+2},a_{i+3},a_{i+4},a_{i+5},a_{i+6})=(\lambda,0,\lambda,\frac{\xi}{\lambda},0,\frac{\xi}{\lambda},\lambda,0).$$
    If $\lambda=0$ or $\sqrt{\xi}$, then every entry of $S_a$ is $0$ or $\sqrt{\xi}$, which contradicts our assumption that $a_0=a\neq 0,\sqrt{\xi}$.
    Now we need to find the values of $a_0$ and $a_1$. It is easy to check that all the possible values lead to $a=0$ or $\sqrt{\xi}$. Therefore, we proved that $a_i\neq 0$ for all $i$.

    Since every $a_i$ is nonzero, by (\ref{eq:an}) we have
    $$a_{i+1}a_i=a_ia_{i-1}+\xi.$$
    Since $a_0a_1=a^2+\xi$, we then find that
    \begin{equation*}
        \frac{a_0}{a_{2i}}=\frac{a_0a_1}{a_1a_2}\frac{a_2a_3}{a_3a_4}\cdots\frac{a_{2i-2}a_{2i-1}}{a_{2i-1}a_{2i}}
        =\left(\frac{a^2+\xi}{a^2}\right)^i
    \end{equation*}
    and
    \begin{equation*}
        \frac{a_1}{a_{2i+1}}=\left(\frac{a^2}{a^2+\xi}\right)^i.
    \end{equation*}
    Therefore we get (\ref{eq:final_a_2i_a_2i+1}) and the period of $S_a$ is $2\cdot\mathrm{ord}(1+\xi/a^2)$.
\end{proof}

\begin{lemma}\label{le:linear=Af}
    Let $f$ be a mapping from $\F_{2^n}$ to itself satisfying $f(0)=0$. Then $f$ is an additive mapping if and only if $\mathcal{A}_f=\F_{2^n}\times\F_{2^n}$.
\end{lemma}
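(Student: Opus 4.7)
The proof is essentially a one-line verification in each direction; no obstacle is anticipated.

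For the \emph{``only if''} direction, I would simply expand the defining relation using additivity. If $f(x+y)=f(x)+f(y)$ for all $x,y\in\F_{2^n}$, then for any $(a,b)\in\F_{2^n}\times\F_{2^n}$ and any $x$,
\begin{align*}
f(x+a)+f(x)+f(x+b+a)+f(x+b)
&= \bigl(f(x)+f(a)\bigr)+f(x)\\
&\quad+\bigl(f(x)+f(a)+f(b)\bigr)+\bigl(f(x)+f(b)\bigr)\\
&=0
\end{align*}
in characteristic $2$. Hence $(a,b)\in\mathcal{A}_f$, so $\mathcal{A}_f=\F_{2^n}\times\F_{2^n}$.

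For the \emph{``if''} direction, I would specialize the defining identity at $x=0$. The assumption $\mathcal{A}_f=\F_{2^n}\times\F_{2^n}$ means that for every $a,b\in\F_{2^n}$ and every $x$,
\[
f(x+a)+f(x)+f(x+a+b)+f(x+b)=0.
\]
Setting $x=0$ and using $f(0)=0$ gives $f(a)+f(a+b)+f(b)=0$, i.e., $f(a+b)=f(a)+f(b)$ for all $a,b\in\F_{2^n}$, which is exactly additivity.

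The only subtlety worth mentioning is that the condition ``$(a,b)\in\mathcal{A}_f$'' is implicitly universally quantified over $x$; this is what lets us specialize to $x=0$ in the converse direction, and $f(0)=0$ is used precisely to kill the spurious $f(x)$ term that would otherwise appear. No deeper machinery from the earlier parts of the paper is needed.
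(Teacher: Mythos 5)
Your proof is correct and follows essentially the same route as the paper: the forward direction is the same direct expansion, and your converse (specializing the universally quantified identity at $x=0$ and using $f(0)=0$) is just a slightly more compact phrasing of the paper's observation that $x\mapsto f(x+a)+f(x)$ is constant. Nothing is missing.
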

\begin{proof}
    If $f$ is additive, then for each $(a,b)\in\F_{2^n}\times\F_{2^n}$ we have
    $$
      f(x+a)+f(x)+f(x+b+a)+f(x+b)=f(a)+f(a)=0.
    $$
    Hence $\mathcal{A}_f=\F_{2^n}\times\F_{2^n}$.

    If $\mathcal{A}_f=\F_{2^n}\times\F_{2^n}$, then for each given $a$,
    $$f(x+a)+f(x)+f(y+a)+f(y)=0,$$
    which means the mapping $x\mapsto f(x+a)+f(x)$ is constant. Since $f(0)=0$, we have $f(x+a)=f(x)+f(a)$, i.e.\ $f$ is additive.
\end{proof}

Now, we can prove a weak version of Theorem \ref{th_linear}.
\begin{reptheorem}{th_linear}
    Let $f$ be a mapping from $\F_{2^n}$ to itself with $f(0)=0$. Let $\xi$ be a nonzero element of $\F_{2^n}$. Write $$\mathcal{P}_n:=\{1/(1+\alpha): \alpha \text{ is a primitive element of }\F_{2^n}\}\cup\{1\}.$$
    If $\mathcal{P}_n$ spans $\F_{2^n}$ over $\F_2$, then the followings are equivalent:
    \begin{enumerate}
        \item $\{(a,\xi/a):a\in\F_{2^n}\}\subseteq \mathcal{A}_f$;
        \item $f$ is an additive mapping.
    \end{enumerate}
\end{reptheorem}
\begin{proof}
    By Lemma \ref{le:linear=Af}, we only need to show that, if $\{(a,\xi/a):a\in\F_{2^n}^*\}\subseteq \mathcal{A}_f$, then $\mathcal{A}_f=\F_{2^n}\times\F_{2^n}$.

    If $\alpha=1+\xi/a^2$ is a primitive element, then by Lemma \ref{le:seq_an} every $c\in\F_{2^n}^*$ appears exactly twice in $S_a$, and one of the corresponding indices of $a_i=c$ is odd, the other is even.  Fix $c$ and assume that $\alpha$ is primitive and $a_k=c$. Then
    \begin{equation*}
    (a_k,a_{k+1})
    =\left\{
    \begin{array}{ll}
      \left(c,\frac{a^2}{c}\right), & \hbox{$k$ is odd;} \\
      \left(c,\frac{a^2+\xi}{c}\right), & \hbox{$k$ is even.}
    \end{array}
    \right.
    \end{equation*}
    Since $(a_k,a_k+a_{k+1})=(a_k,b_k)\in\mathcal{A}_f$, it is easy to get that $(a_k,a_{k+1})\in\mathcal{A}_f$.
    Notice that $a^2=\xi/(1+\alpha)$. We have
    $$\left\{\left(c,\frac{\xi}{c}\frac{1}{1+\alpha}\right): \alpha \text{ is a primitive element of }\F_{2^n}\right\}\subseteq\mathcal{A}_f.$$
    Since $(c,\xi/c)$ is also in $\mathcal{A}_f$, by Lemma \ref{le:rho_wedge_vee} (2) we see that $(c,d\xi/c)\in\mathcal{A}_f$, where $d$ is a linear combination of elements of $\mathcal{P}_n$. Therefore if $\mathcal{P}_n$ spans $\F_{2^n}$, we have $\mathcal{A}_f\supseteq\F_{2^n}^*\times\F_{2^n}^*$, i.e.\;$f$ is additive.
\end{proof}

Let $\Tr_n:\F_{2^n}\rightarrow \F_2$ be the trace mapping. Notice that $\mathcal{P}_n$ spans $\F_{2^n}$ over $\F_2$ if and only if the points of $\mathcal{P}_n$ are not contained in any hyperplane of $\F_2^n$. It holds if and only if for every $\beta\in\F_{2^n}^*$, there exists some $a\in\mathcal{P}_n$ such that $\Tr_n(\beta a)=1$. This actually holds for $n\ge 18$ by setting $q=2$, $r=m=l=1$, $f_1(x)=\frac{\beta}{x+1}$ and $t_1=1$ in the following theorem.
\begin{theorem}[Stephen D. Cohen \cite{cohen_finite_2005}]\label{th_cohen}
    Let $f_1(x),\dots,f_r(x)\in\F_{q^n}(x)$ form a strongly linearly independent set over $\F_q$  with $\deg f_i\le m$, $i=1,\dots,r$ and let $t_1,\dots,t_r\in\F_q$ be given. Also let $l$ be any divisor of $q^n-1$. Suppose that
    $$n>4(r+\log_q(9.8l^{3/4}rm)).$$
    Then there exists an element $\gamma\in\F_{q^m}$ of order $(q^n-1)/l$ such that
    $$\Tr_n(f_{\gamma_i}(\gamma))=t_i,\qquad i=1,\dots,r.$$
\end{theorem}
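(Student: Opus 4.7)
The plan is to prove Cohen's theorem by the standard character-sum method: express the number of valid $\gamma$ as a weighted sum of multiplicative and additive characters, extract a main term from the trivial characters, and bound the nontrivial contributions using Weil's estimates for hybrid character sums on $\F_{q^n}$. The strong linear independence hypothesis on $f_1,\dots,f_r$ is exactly what is needed to guarantee that the relevant additive-character arguments are not of the form $g(x)^q - g(x) + c$, so that Weil's bound is applicable.

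First, I would set up the counting function. Let
$$N := \#\{\gamma\in\F_{q^n}^* : \mathrm{ord}(\gamma) = (q^n-1)/l,\ \Tr_n(f_i(\gamma)) = t_i \text{ for } i=1,\dots,r\},$$
where we discard the (at most $rm$) poles of the $f_i$. The trace conditions can be detected by additive characters: for a fixed nontrivial additive character $\psi_1$ of $\F_q$, the indicator of $\Tr_n(f_i(\gamma))=t_i$ equals $q^{-1}\sum_{s_i\in\F_q}\psi_1(s_i(\Tr_n(f_i(\gamma))-t_i))$. Composing with the trace $\Tr_n$ turns each factor into an additive character of $\F_{q^n}$ evaluated at $f_i(\gamma)$. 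The order condition is detected by the $e$-free indicator for $e=(q^n-1)/l$, which by Möbius inversion can be written as
$$\frac{\phi(e)}{e}\sum_{d\mid e}\frac{\mu(d)}{\phi(d)}\sum_{\chi\in\widehat{\F_{q^n}^*},\ \mathrm{ord}(\chi)=d}\chi(\gamma),$$
so multiplication by a suitable power gives the indicator of $\mathrm{ord}(\gamma)=(q^n-1)/l$.

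Combining these two expansions gives $N$ as a double sum over multiplicative characters $\chi$ and over $(s_1,\dots,s_r)\in\F_q^r$ of hybrid sums
$$S(\chi,\mathbf{s}) := \sum_{\gamma\in\F_{q^n}^*}\chi(\gamma)\,\psi\!\left(\sum_{i=1}^r s_i f_i(\gamma)\right),$$
where $\psi=\psi_1\circ\Tr_n$ is the lifted additive character and poles are suitably treated. The \textbf{main term} comes from the trivial choice $\chi=1$ and $\mathbf{s}=\mathbf{0}$, contributing roughly $\phi(e)/(l\cdot q^r) \sim q^{n-r}/l$ valid $\gamma$. All other terms are \emph{error terms}, and I would bound them via Weil's theorem: provided $\sum_i s_i f_i$ is not of the form $g^q-g+c$ (or, jointly with $\chi$, does not trivialize), one has $|S(\chi,\mathbf{s})|\le C(r,m)\,q^{n/2}$ with $C(r,m)$ linear in $rm$. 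The strong linear independence of $\{f_i\}$ over $\F_q$ is what ensures this nontriviality for every nonzero $\mathbf{s}$, so that all nontrivial terms admit the square-root bound.

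Finally, I would count the error contribution. The total number of $(\chi,\mathbf{s})$ pairs is at most $l\cdot q^r$, and a standard divisor-count estimate $W(e)\le 4514.7 \cdot e^{1/4}$ (Cohen's explicit bound) controls the number of characters of each order. Collecting these gives an error of size $O(l^{3/4}\cdot r\cdot m\cdot q^{n/2+r/2})$ after sieving, and requiring the main term $q^{n-r}/l$ to dominate produces exactly an inequality of shape $n/2 > r + \log_q(c\cdot l^{3/4}\cdot rm)$; tightening the constants and using $n>4(r+\log_q(9.8 l^{3/4}rm))$ provides enough slack. The \textbf{main obstacle} is producing a workable explicit constant in Weil's bound for hybrid sums of rational functions and carefully controlling the divisor-sum arising from Möbius inversion on $(q^n-1)/l$; this is the technical core of Cohen's argument, and the specific numerical constant $9.8$ comes from optimizing these two estimates against each other.
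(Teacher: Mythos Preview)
The paper does not prove this theorem at all: it is quoted verbatim from Cohen's paper \cite{cohen_finite_2005} and used as a black box to finish the proof of Theorem~\ref{th_linear}. There is therefore no ``paper's own proof'' to compare your proposal against.

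That said, your outline is a faithful sketch of the standard method behind results of this type (and of Cohen's actual argument): detect the order condition by a M\"obius-inverted sum over multiplicative characters, detect the trace conditions by additive characters, and bound the resulting hybrid sums via Weil. The strong linear independence hypothesis is indeed precisely what prevents the additive part from collapsing to the form $g^q-g+c$. Two small caveats if you were to carry this out: (i) the order condition is ``$\gamma$ has order exactly $(q^n-1)/l$'', not ``$\gamma$ is $e$-free'', so the character sieve runs over the divisors of $l$ rather than of $e$, and this is where the $l^{3/4}$ (from the explicit bound on $\sum_{d\mid l}|\mu(d)|$ or $2^{\omega(l)}$) enters; (ii) the error count is not quite $l\cdot q^r$ terms of size $q^{n/2}$---one must separate the cases $\chi$ trivial/$\mathbf{s}\neq 0$, $\chi$ nontrivial/$\mathbf{s}=0$, and both nontrivial, and the constant $9.8$ comes out of that bookkeeping together with the divisor bound. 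These are refinements rather than gaps; the core idea is right.
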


To be \emph{strongly linearly independent} over $\F_q$ means that only the all-zero $\F_q$-linear combination of $f_1,\dots,f_r$ can be written in the form $h(x)^p-h(x)+\theta$ for some $h(x)\in\F_{q^n}(x)$ and $\theta\in\F_{q^n}$, where $p$ is the characteristic of $\F_q$. When $q=2$, $r=1$ and $f_1(x)=\frac{\beta}{x+1}$, it is readily verified that $\{f_1(x)\}$ is strongly linearly independent.

Using a MAGMA\cite{Magma} program, we showed that $\mathcal{P}_n$ also spans $\F_{2^n}$ for $n<18$. Hence, we can remove the condition on $\mathcal{P}_n$ in Theorem \ref{th_linear}* and we finish the proof of Theorem \ref{th_linear}.

\section{Component functions of $\F_2^n$-representations}\label{se_component}
Through the $\F_2^n$-representation of RDSs in $\Z_4^n$ relative to $\Z_2^n$, we see that the component functions of $h$, defined as $\sum_{i \in \Lambda} h_i$ for each nonempty subset $\Lambda\subseteq\F_2^n$, are the ingredients to build $(2^n,2^n,2^n,1)$-RDSs.

Notice that $\Delta_{h,a}(x)$ is bijective if and only if all the component functions of $\Delta_{h,a}(x)$ are balanced (the image set contains $0$ and $1\in\F_2$ equally often). We define $f$ to be a \emph{shifted-bent} (or \emph{bent$_4$}) \emph{function} with respect to $\Lambda\subseteq\{0,1,\cdots n-1\}$, if
\begin{equation}\label{eq:balanced_boolean_f}
  f(x+a)+f(x)+\sum_{i\in\Lambda}x_ia_i
\end{equation}
are balanced, for all $a\neq (0,\cdots,0)$, where $x=(x_0,\cdots,x_{n-1})$ and $a=(a_0,\cdots,a_{n-1})$. We call $\Lambda$ the \emph{shift index set} of $f$. If $\Lambda$ is empty, then shifted-bent and bent functions are the same. As in the relation between $p$-ary bent function and planar function with odd $p$, the investigation of shifted-bent functions is helpful to understand $(2^n,2^n,2^n,1)$-RDSs.

\begin{remark}
    In fact, bent$_4$ functions are defined in \cite{riera_generalized_2006} through a generalization of Walsh transformation. It is readily verified that this definition is equivalent to the definition in this paper.
\end{remark}

It is worth noting that linear and constant terms in $f$ only contribute to constant and $0$ respectively in $f(x+a)+f(x)$. Hence, they do not affect the shifted-bent property of $f$.

\begin{example}
    Let $\Lambda=\{0,1,\cdots,n-1\}$, then $f=0$ is shifted-bent.
\end{example}

Next, we give several constructions of shifted-bent functions. For large $n$, the degree of the corresponding polynomial of shifted-bent function can be larger than 2 (for small $n$, at least by computer program we find that for $n=4$ there is no non-quadratic shifted-bent function).   Let $x=(x_0, x_1, \cdots, x_{n-1})$, $y=(y_0, y_1, \cdots, y_{n-1})\in\F_2^n$. We define the inner product of $x$ and $y$ by $\langle x,y\rangle:=\sum_{i=0}^{n-1}x_iy_i$. We now give a construction, which is similar to the Maiorana-McFarland bent functions:

\begin{theorem}\label{th_MM}
    Let $g$ be an arbitrary Boolean function on $\F_2^n$, then
    $$f: (x,y) \mapsto\langle x, \Pi(y)\rangle+g(y)$$
    is shifted-bent with respect to any subset of the indices of $\{y_i:i=0,1,\cdots,n-1\}$ if and only if $\Pi$ is a permutation on $\F_2^n$.
\end{theorem}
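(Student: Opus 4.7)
The plan is to verify the shifted-bent condition directly, exploiting the fact that $f(x+a,y+b)+f(x,y)$ is affine in $x$ with ``slope'' $\Pi(y+b)+\Pi(y)$. Fix any nonzero $(a,b)\in\F_2^n\times\F_2^n$ and any shift index set $\Lambda\subseteq\{0,1,\dots,n-1\}$ attached to the $y$-coordinates. A routine expansion will give
\begin{equation*}
f(x+a,y+b)+f(x,y)+\sum_{j\in\Lambda}y_jb_j=\langle x,\Pi(y+b)+\Pi(y)\rangle+R(y;a,b),
\end{equation*}
where $R(y;a,b):=\langle a,\Pi(y+b)\rangle+g(y+b)+g(y)+\sum_{j\in\Lambda}y_jb_j$ is independent of $x$. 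Summing $(-1)^{(\cdot)}$ over $x\in\F_2^n$ for fixed $y$ yields zero whenever $\Pi(y+b)+\Pi(y)\neq 0$, and yields $2^n(-1)^{R(y;a,b)}$ otherwise.

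For the ``if'' direction I would split on whether $b=0$. If $b\neq 0$ and $\Pi$ is a permutation, then $\Pi(y+b)\neq\Pi(y)$ for every $y$, so the $x$-sum vanishes for every $y$ and the expression is balanced. If $b=0$ (so $a\neq 0$), the $x$-linear part drops out, the shift term is empty, and the expression collapses to $\langle a,\Pi(y)\rangle$; this is balanced in $y$ because $\Pi$ is a bijection, hence balanced in $(x,y)$.

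For the ``only if'' direction I would set $b=0$ (and arbitrary nonzero $a$), where the shift term disappears regardless of $\Lambda$, so the hypothesis forces $y\mapsto\langle a,\Pi(y)\rangle$ to be balanced for every $a\neq 0$. Then a short Fourier inversion closes the argument: writing $N_z:=\#\{y:\Pi(y)=z\}$, one has
\begin{equation*}
N_z=2^{-n}\sum_{a\in\F_2^n}(-1)^{\langle a,z\rangle}\sum_{y\in\F_2^n}(-1)^{\langle a,\Pi(y)\rangle},
\end{equation*}
in which only the $a=0$ term survives by hypothesis, giving $N_z=1$ for every $z$. Thus $\Pi$ is a permutation.

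There is really no hard obstacle here; the entire argument pivots on the classical characterization of permutations of $\F_2^n$ by balancedness of their nontrivial linear characters, combined with the structural observation that the shift $\sum_{j\in\Lambda}y_jb_j$ is anchored to the $y$-coordinates and vanishes automatically in the decisive case $b=0$. This makes the statement genuinely uniform over the choice of $\Lambda$ and the choice of auxiliary Boolean function $g$, in the spirit of the classical Maiorana--McFarland analysis of bent functions.
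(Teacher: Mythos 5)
Your proposal is correct and follows essentially the same route as the paper: the same decomposition of $f(x+a,y+b)+f(x,y)+\sum_{j\in\Lambda}y_jb_j$ into a part linear in $x$ with slope $\Pi(y+b)+\Pi(y)$ plus an $x$-independent remainder, and the same case split on $b=0$ versus $b\neq 0$. The only difference is that you spell out, via Fourier inversion, the classical equivalence between $\Pi$ being a permutation and the balancedness of all $y\mapsto\langle a,\Pi(y)\rangle$ for $a\neq 0$, which the paper simply invokes.
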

\begin{proof}
    Let $\Lambda$ denote a subset of $\{y_i:i=0,1,\cdots,n-1\}$. Let $\Pi_i$ be the $i$-th coordinate function of $\Pi$. For $(a,b)\neq(0,0)$ we have
    \begin{align}\label{eq:MM1}
         &f(x+a,y+b)+f(x,y)+\sum_{i\in\Lambda}y_ib_i  \\
        =&\sum_{i=0}^{n-1} x_i(\Pi_i(y+b)+\Pi_i(y))+\sum_{i=0}^{n-1}a_i\Pi_i(y+b)+g(y+b)+g(y)+\sum_{i\in\Lambda}y_ib_i. \notag
    \end{align}
    When $b=0$, this equals
    $$\sum_{i=0}^{n-1}a_i\Pi(y),$$
    which is balanced for all $a\neq 0$ if and only if $\Pi$ is a permutation.

    When $b\neq 0$, we only have to show that, if $\Pi$ is a permutation, then (\ref{eq:MM1}) is balanced. Since $\Pi(y+b)+\Pi(y)\neq 0$ for each $y$, (\ref{eq:MM1}) defines a balanced Boolean function on $x\in\F_2^n$. Hence we proved the claim.
\end{proof}

By Theorem \ref{th_MM}, we have the following result:
\begin{corollary}\label{co_MM}
    Let $g$ be an arbitrary mapping from $\F_{2^n}$ to itself, and define $f:\F_{2^n}\times\F_{2^n}\rightarrow\F_{2^n}$ by
    $$f: (x,y) \mapsto x\Pi(y)+g(y).$$
    Then every component function of $f$
    is shifted-bent with respect to an arbitrary subset of the indices of $\{y_i:i=0,1,\cdots,n-1\}$ if and only if $\Pi$ is a permutation on $\F_{2^n}$.
\end{corollary}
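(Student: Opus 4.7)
The plan is to reduce Corollary \ref{co_MM} directly to Theorem \ref{th_MM} via the trace pairing on $\F_{2^n}$. Fix any basis $B=\{\xi_0,\dots,\xi_{n-1}\}$ of $\F_{2^n}$ over $\F_2$, and let $B^*=\{\xi_0^*,\dots,\xi_{n-1}^*\}$ be its trace-dual basis, so that $\Tr(\xi_i\xi_j^*)=\delta_{ij}$. Identify $\F_{2^n}$ with $\F_2^n$ via coordinates in $B$ throughout, so that $x$, $y$, $g(y)$ and $\Pi(y)$ all admit both descriptions.

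The first step is to express the component functions of $f$ uniformly. As $\Lambda_f$ ranges over nonempty subsets of $\{0,\dots,n-1\}$, the assignment $\Lambda_f\mapsto\mu:=\sum_{i\in\Lambda_f}\xi_i^*$ is a bijection onto $\F_{2^n}^*$, and a routine computation using $\Tr(\xi_i^*\xi_j)=\delta_{ij}$ gives $\sum_{i\in\Lambda_f}f_i(x,y)=\Tr(\mu\,f(x,y))$. Thus running over all component functions of $f$ amounts to running over all $\mu\in\F_{2^n}^*$.

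The second step is to recognize each $\Tr(\mu f)$ as a Boolean function of the form treated in Theorem \ref{th_MM}. Using $\F_2$-bilinearity of $\Tr$ and the dual basis relation,
\begin{equation*}
\Tr(\mu f(x,y))=\Tr\bigl(x\cdot\mu\Pi(y)\bigr)+\Tr(\mu g(y))=\langle x,\widetilde\Pi(y)\rangle+\tilde g(y),
\end{equation*}
where $\widetilde\Pi(y)\in\F_2^n$ is the coordinate vector of $\mu\Pi(y)$ in the dual basis $B^*$, and $\tilde g(y):=\Tr(\mu g(y))$ is a Boolean function on $\F_2^n$. Since $z\mapsto\mu z$ on $\F_{2^n}$ is an $\F_2$-linear bijection (as $\mu\neq 0$) and the change of basis from $B$ to $B^*$ is an $\F_2$-linear bijection as well, $\widetilde\Pi$ is a permutation of $\F_2^n$ if and only if $\Pi$ is a permutation of $\F_{2^n}$.

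Finally, Theorem \ref{th_MM} applied to $\langle x,\widetilde\Pi(y)\rangle+\tilde g(y)$ says that this function is shifted-bent with respect to every subset of the $y$-indices if and only if $\widetilde\Pi$ permutes $\F_2^n$. Combining with the previous step and letting $\mu$ range over $\F_{2^n}^*$ yields the stated equivalence for every component function of $f$ simultaneously. No substantial obstacle is expected; the only care required is to keep the two bases $B$ and $B^*$ straight when passing between the $\F_{2^n}$-valued setting of the corollary and the $\F_2^n$-valued framework of Theorem \ref{th_MM}.
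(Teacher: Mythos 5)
Your argument is correct and follows the same route the paper intends: the paper derives Corollary \ref{co_MM} directly from Theorem \ref{th_MM} without writing out the details, and your reduction via the trace pairing --- identifying each component function with $\Tr(\mu f(x,y))=\langle x,\widetilde\Pi(y)\rangle+\tilde g(y)$ for $\mu\in\F_{2^n}^*$ and noting that $\widetilde\Pi$ permutes $\F_2^n$ iff $\Pi$ permutes $\F_{2^n}$ --- is exactly the computation being suppressed. Nothing is missing.
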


It is easy to prove the following secondary construction of shifted-bent functions:
\begin{proposition}
    Let $m,n$ be positive integers, and let $\Pi_i:\F_2^{n}\rightarrow\F_2$ be a shifted-bent Boolean function with respect to some $\Lambda_i$, $i=1,2$. Then $$f(x,y):=\Pi_1(x)+\Pi_2(y)$$ is shifted-bent with respect to $\Lambda_1\cup\Lambda_2$.
\end{proposition}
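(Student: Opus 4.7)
The plan is to unfold the shifted-bent condition directly for $f(x,y)=\Pi_1(x)+\Pi_2(y)$ and split the resulting expression along the two independent coordinate blocks. Interpreting $\Lambda_1\cup\Lambda_2$ as the subset of coordinates of $\F_2^n\times\F_2^n$ that uses $\Lambda_1$ on the $x$-block and $\Lambda_2$ on the $y$-block, a direct computation gives
\begin{equation*}
f(x+a,y+b)+f(x,y)+\sum_{i\in\Lambda_1}x_ia_i+\sum_{j\in\Lambda_2}y_jb_j=g_a(x)+h_b(y),
\end{equation*}
where $g_a(x):=\Pi_1(x+a)+\Pi_1(x)+\sum_{i\in\Lambda_1}x_ia_i$ and $h_b(y):=\Pi_2(y+b)+\Pi_2(y)+\sum_{j\in\Lambda_2}y_jb_j$. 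This is just the linearity of the two operations involved; the $x$- and $y$-pieces never interact.

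Now I would fix a shift $(a,b)\neq(0,0)$. At least one of $a$, $b$ is nonzero; by symmetry suppose $a\neq 0$. The shifted-bent property of $\Pi_1$ with respect to $\Lambda_1$ then says that $g_a$ is balanced on $\F_2^n$. To finish, I invoke the elementary fact that if $g\colon\F_2^n\to\F_2$ is balanced and $h\colon\F_2^n\to\F_2$ is arbitrary, then $(x,y)\mapsto g(x)+h(y)$ is balanced on $\F_2^n\times\F_2^n$: for each fixed $y$, the map $x\mapsto g(x)+h(y)$ hits each value of $\F_2$ exactly $2^{n-1}$ times, so summing over $y$ yields equal totals. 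Applied with $g=g_a$ and $h=h_b$, this gives the required balancedness.

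No serious obstacle is expected; the only point requiring minor care is the notational identification of $\Lambda_1\cup\Lambda_2$ as a subset of the coordinates of the product $\F_2^n\times\F_2^n$, acting block-wise. The proposition is the natural ``direct sum'' analogue of the classical fact that the sum of two bent functions in disjoint variables is bent, and the argument above shows that shifted-bentness is preserved for the same structural reason.
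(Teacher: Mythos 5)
Your proof is correct, and since the paper only remarks that the proposition ``is easy to prove'' without supplying an argument, your direct verification is precisely the intended one: the shift expression splits as $g_a(x)+h_b(y)$ with at least one summand balanced, and a balanced function plus a function of disjoint variables is balanced. The one point you rightly flag --- reading $\Lambda_1\cup\Lambda_2$ block-wise as a subset of the coordinates of $\F_2^{n}\times\F_2^{n}$ --- is exactly the identification the paper's statement presupposes, so there is nothing to add.
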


By Theorem \ref{th_MM}, we see that there exist shifted-bent functions which are not quadratic. Furthermore, Corollary \ref{co_MM} shows us that it is possible to combine them together to a ``vectorial'' one. Recall Problem \ref{qu_nonquadratic} in Section 3, we want to find a mapping $h:\F_2^n \rightarrow \F_2^n$ (the $\F_2^n$-representation), which is an $n$-dimensional combination of shifted-bent functions $h_i$ with shift index set $\Lambda_i=\{i\}$ for $i=0,1,\dots,n-1$ and at least one of $h_i$ is of degree larger than 2. Therefore we pose this problem again in a more general way:
\begin{problem}
    What is the maximal $m$ for each $n$, such that Boolean functions $f_i:\F_2^n\rightarrow \F_2$, $i=0,1,\cdots, m-1$ satisfy the following two conditions:
    \begin{enumerate}
      \item For each non-empty set $\Omega\subseteq\{0,\dots,m-1\}$, $\sum_{i\in\Omega}f_i$ is a shifted-bent function with respect to $\Omega$;
      \item At least one of $f_i$ is non-quadratic?
    \end{enumerate}
    Hence Problem \ref{qu_nonquadratic} is about whether $m=n$ is possible for some $n$.
\end{problem}

\section*{Acknowledgement}
I would like to thank Norbert Knarr, Alexander Pott and Kai-Uwe Schmidt for valuable comments and suggestions. In addition I would like to thank Gohar Kyureghyan for pointing out Theorem \ref{th_cohen}, which leads to the proof of Theorem \ref{th_linear}. Part of this work was done while I was visiting the University of Padua, I am grateful to Michel Lavrauw and Corrado Zanella for their hospitality.

This work is partially supported by the National Science Foundation of China (Grant No. 61070215, 61103191).

\bibliographystyle{plain}

\end{document}